\newtheorem{theorem}{Theorem}
\newtheorem{lemma}[theorem]{Lemma}
\newtheorem{proposition}[theorem]{Proposition}
\newtheorem{lettertheorem}{Theorem}
\newtheorem{letterlemma}[lettertheorem]{Lemma}
\theoremstyle{definition}
\theoremstyle{remark}
\numberwithin{equation}{section}
\newcommand{\nm}[1]{\lVert#1\rVert}
\newcommand{\D}{\mathbb{D}}
\newcommand{\DD}{\widehat{\mathcal{D}}}
\newcommand{\Dd}{\widecheck{\mathcal{D}}}
\newcommand{\M}{\mathcal{M}}
\newcommand{\DDD}{\mathcal{D}}
\newcommand{\N}{\mathbb{N}}
\newcommand{\RR}{\mathbb{R}}
\newcommand{\C}{\mathbb{C}}
\renewcommand{\phi}{\varphi}
\def\a{\alpha}       \def\b{\beta}        \def\g{\gamma}
     \def\om{\omega}      
       \def\t{\theta}       
         \def\r{\rho}         
\def\G{\Gamma}
\def\omg{\widehat{\omega}}
\def\nug{\widehat{\nu}}
\def\mug{\widehat{\mu}}
\def\etag{\widehat{\eta}}
\DeclareMathOperator{\supp}{supp}
\renewcommand{\H}{\mathcal{H}}
\newenvironment{Prf}{\noindent{\emph{Proof of}}}
{\hfill$\Box$ }
\begin{document}

\title[Littlewood-Paley inequalities for fractional derivative ]
{Littlewood-Paley inequalities for fractional derivative on Bergman spaces}

\keywords{Bergman space, fractional derivative, radial weight, Littlewood-Paley formula}

\author{Jos\'e \'Angel Pel\'aez}
\address{Departamento de An\'alisis Matem\'atico, Universidad de M\'alaga, Campus de
Teatinos, 29071 M\'alaga, Spain} \email{japelaez@uma.es}

\author{Elena de la Rosa}
\address{Departamento de An\'alisis Matem\'atico, Universidad de M\'alaga, Campus de
Teatinos, 29071 M\'alaga, Spain} 
\email{elena.rosa@uma.es}

\thanks{This research was supported in part by Ministerio de Econom\'{\i}a y Competitividad, Spain, projects
PGC2018-096166-B-100; La Junta de Andaluc{\'i}a,
projects FQM-210  and UMA18-FEDERJA-002.}

\subjclass[26A33, 30H20]{26A33,  30H20}

\maketitle


\begin{abstract}

For any  pair $(n,p)$, $n\in\N$ and $0<p<\infty$, it has been recently proved in \cite{PR19} that
a radial weight $\omega$ on the unit disc of the complex plane $\mathbb{D}$ satisfies the Littlewood-Paley equivalence 
$$
\int_{\mathbb{D}}|f(z)|^p\,\omega(z)\,dA(z)\asymp\int_\mathbb{D}|f^{(n)}(z)|^p(1-|z|)^{np}\omega(z)\,dA(z)+\sum_{j=0}^{n-1}|f^{(j)}(0)|^p,$$
 for any analytic function $f$ in $\mathbb{D}$, if and only if $\omega\in\mathcal{D}=\widehat{\mathcal{D}} \cap \widecheck{\mathcal{D}}$.
A radial weight  
$\omega$ belongs to the class $\widehat{\mathcal{D}}$ if
  $\sup_{0\le r<1} \frac{\int_r^1 \omega(s)\,ds}{\int_{\frac{1+r}{2}}^1\omega(s)\,ds}<\infty$, and  $\omega \in \widecheck{\mathcal{D}}$   if there exists $k>1$ such that 
   $\inf_{0\le r<1} \frac{\int_{r}^1\omega(s)\,ds}{\int_{1-\frac{1-r}{k}}^1 \omega(s)\,ds}>1$.

In this paper we extend this result to the setting of fractional derivatives. Being precise,  
for an analytic function   $f(z)=\sum_{n=0}^\infty \widehat{f}(n) z^n$
 we consider the fractional derivative 
 $
  D^{\mu}(f)(z)=\sum\limits_{n=0}^{\infty} \frac{\widehat{f}(n)}{\mu_{2n+1}} z^n
 $
induced by a radial weight $\mu \in \mathcal{D}$
  where $\mu_{2n+1}=\int_0^1 r^{2n+1}\mu(r)\,dr$. Then, we prove that
for any $p\in (0,\infty)$, the Littlewood-Paley equivalence 
$$\int_\mathbb{D} |f(z)|^p \omega(z)\,dA(z)\asymp \int_\mathbb{D}|D^{\mu}(f)(z)|^p\left[\int_{|z|}^1\mu(s)\,ds\right]^p\omega(z)\,dA(z)$$
holds for any analytic function $f$ in $\mathbb{D}$ if and only if $\omega\in\mathcal{D}$.

 We also prove that for any  $p\in (0,\infty)$, the inequality
  $$\int_\mathbb{D}|D^{\mu}(f)(z)|^p\left[\int_{|z|}^1\mu(s)\,ds\right]^p\om(z)\,dA(z)
  \lesssim \int_\mathbb{D} |f(z)|^p \om(z)\,dA(z) $$
holds for any analytic function $f$ in $\D$ if and only if $\omega\in\DD$.

  \end{abstract}

\maketitle

\section{Introduction}

Let $\H(\D)$ denote the space of analytic functions in the unit disc $\D=\{z\in\C:|z|<1\}$.
For $f\in\H(\D)$ and $0<r<1$, set
    \begin{equation*}
    \begin{split}
    M_p(r,f)&=\left(\frac{1}{2\pi}\int_{0}^{2\pi} |f(re^{it})|^p\,dt\right)^{\frac1p},\quad
    0<p<\infty,
    \end{split}
    \end{equation*}
and $M_\infty(r,f)=\max_{|z|=r}|f(z)|$. For $0<p\le\infty$, the Hardy space $H^p$ consists of $f\in \H(\mathbb D)$ such that $\|f\|_{H^p}=\sup_{0<r<1}M_p(r,f)<\infty$. For a nonnegative function $\om\in L^1([0,1))$, the extension to $\D$, defined by 
$\om(z)=\om(|z|)$ for all $z\in\D$, is called a radial weight.
 For $0<p<\infty$ and such an $\omega$, the Lebesgue space $L^p_\om$ consists of complex-valued measurable functions $f$ on $\D$ such that
    $$
    \|f\|_{L^p_\omega}^p=\int_\D|f(z)|^p\omega(z)\,dA(z)<\infty,
    $$
where $dA(z)=\frac{dx\,dy}{\pi}$ is the normalized Lebesgue area measure on $\D$. The corresponding weighted Bergman space is $A^p_\om=L^p_\omega\cap\H(\D)$. Throughout this paper we assume $\widehat{\om}(z)=\int_{|z|}^1\om(s)\,ds>0$ for all $z\in\D$, for otherwise $A^p_\om=\H(\D)$.

\medskip  A well-known formula ensures that for each $n\in\N$ and $0<p<\infty$
\begin{equation}\label{LP1intro}
	\|f\|_{A^p_\om}^p\asymp\int_\D|f^{(n)}(z)|^p(1-|z|)^{np}\om(z)\,dA(z)+\sum_{j=0}^{n-1}|f^{(j)}(0)|^p,\quad f\in\H(\D),
	\end{equation}
	if $\omega$ is a standard radial weight, that is, $\om(z)=(\alpha+1)(1-|z|^2)^\alpha$ for some $-1<\alpha<\infty$. 
 Generalizations of this Littlewood-Paley formula  have been obtained in \cite{AS,PavP,Si}
 for different classes of radial weights. 
However, the question for which radial weights the above  equivalence \eqref{LP1intro} is valid has been a known open problem for decades.
This question has been recently solved in \cite[Theorem~5]{PR19}, in fact
 \eqref{LP1intro} holds for a radial weight $\om$ if and only if $\om\in\DDD=\DD\cap\Dd$. Recall that a radial weight $\om$ belongs to~$\DD$ if there exists a constant $C=C(\om)>1$ such that  $\widehat{\om}(r)\le C\widehat{\om}(\frac{1+r}{2})$ for all $0\le r<1$. Further, a radial weight $\omega$ belongs to~$\Dd$ if there exist constants $k=k(\om)>1$ and $C=C(\om)>1$ such that $\widehat{\om}(r)\ge C\widehat{\om}(1-\frac{1-r}{k})$ for all $0\le r<1$.

It is also worth mentioning that the inequality 
	\begin{equation}\label{LP2intro}
	\int_\D|f^{(n)}(z)|^p(1-|z|)^{np}\om(z)\,dA(z)+\sum_{j=0}^{n-1}|f^{(j)}(0)|^p\lesssim \|f\|_{A^p_\om}^p,\quad f\in\H(\D),
	\end{equation}
	holds for a radial weight $\om$ and each  pair $(n,p)$,  $n\in\N$ and $0<p<\infty$,  if and only if $\om\in\DD$
	 \cite[Theorem~6]{PR19}.

Throughout the next few lines we offer a brief insight to the classes of weights $\DDD$, $\DD$ and $\Dd$. Each standard radial weight
  belongs to $\DDD$,  while $\Dd\setminus\DDD$ contains weights that tend to zero exponentially. The class of rapidly increasing weights, introduced in \cite{PelRat}, lies entirely within $\DD\setminus\DDD$, and a typical example of such a weight is $\omega(z)=(1-|z|^2)^{-1}\left(\log\frac{e}{1-|z|^2}\right)^{-\alpha}$, where $\alpha>1$. However we emphasize that the containment in $\DD$ or $\Dd$ does not require continuity neither positivity. In fact, weights in these classes may vanish on a relatively large part of each outer annulus $\{z:r\le|z|<1\}$ of $\D$. For basic properties of the aforementioned classes, concrete nontrivial examples and more, see \cite{PelSum14,PelRat,PR19} and the relevant references therein.

The theory of weighted Bergman spaces $A^p_\omega$ induced by   non-radial weights is at its early stages, and plenty of essential
properties have not been described yet.
 However there have been  developments towards different directions during the last decades \cite{AlCo,HuLvSc}.
As for Littlewood-Paley formulas for derivatives, we recall that  
\eqref{LP1intro} holds if $\omega$ is a Bekoll\'e-Bonami weight \cite{AlCo,APR2}, see also \cite{BWZ,PR22} for related results.

On the other hand,  \eqref{LP1intro} can be extended to the setting of fractional derivatives when $\omega$ 
is a standard weight. Indeed, for  $f(z)=\sum_{n=0}^\infty \widehat{f}(n)z^n \in \H(\D)$  and $\beta>0$, consider
 the operator
\begin{equation}\label{eq:deffracder2}
D^{\beta} (f)(z)=\frac{2}{\G(\beta+1)}\sum\limits_{n=1}^{\infty} \frac{\G (n+\beta+1)}{\G(n+1)} \widehat{f}(n) z^n,\quad z\in\D,
\end{equation}
which basically coincides with the fractional derivative of order $\b>0$  introduced by Hardy and Littlewood in \cite[p. 409]{HLMathZ}.
The differences between \eqref{eq:deffracder2} and \cite[(3.13)]{HLMathZ}  are in the multiplicative factor $\frac{2}{\G(\beta+1)}$ and the inessential factor $z^\beta$. A floklore result  states that
\begin{equation}
\label{LP2fracHL}
\| f\|^p_{A^p_\omega}\asymp \int_\D|D^{\beta} (f)(z)|^p(1-|z|)^{\beta p}\om(z)\,dA(z),\quad f\in\H(\D),
\end{equation}
for any $\beta,p>0$ and any standard radial weight $\omega$, see \cite[Theorem~A]{BlascoCan95} for the range $p\ge 1$. Moreover, 
Flett proved in \cite[Theorem~6]{Flett72} that
\eqref{LP2fracHL} remains true for any $\beta,p>0$ if
$D^{\beta} f$ is replaced by the multiplier transformation
$f^{[\beta]}(z)=\sum_{n=0}^\infty (n+1)^\beta\widehat{f}(n)z^n$, 
which may also be regarded as fractional derivative
of order $\beta>0$.

For  a radial weight $\mu\in \DD$, we define 
the  fractional derivative of $f$ induced by $\mu$
\begin{equation*}
    \label{Dmu}
    D^{\mu}(f)(z)=\sum\limits_{n=0}^{\infty} \frac{\widehat{f}(n)}{\mu_{2n+1}} z^n, \; z \in \D.
\end{equation*}
 Here $\mu_{2n+1}$ are the odd moments of $\mu$, and in general from now on we write $\mu_x=\int_0^1r^x\mu(r)\,dr$
 for $\mu$ a radial weight and  $x\ge0$.
It is clear that $D^{\mu}(f)$ is a polynomial if $f$ is a polynomial and $D^{\mu}(f) \in \H(\D)$ for each $f \in \H(\D)$, by Lemma \ref{le:welldef} below. See  \cite{PeralaAASF20,ZhuPacific94} for related definitions or reformulations of classical and generalized fractional derivative,
and  observe that 
$D^{\mu}=D^{\beta} $ if $\mu$ is the standard weight $\mu(z)=\beta(1-|z|^2)^{\beta-1}$, $\b >0$.

The primary purpose of this paper is twofold:  extending the
  Littlewood-Paley formulas \eqref{LP1intro} and \eqref{LP2intro} replacing the higher order derivative $f^{(n)}$ by
  the fractional derivative 
  $D^\mu (f)$ induced by $\mu\in\DDD$, and describing the radial weights  such that the arising formulas hold. 
With this aim, observe that $\widehat{\mu}(z)\asymp (1-|z|)^{\beta}$ when   $\mu(z)=\beta(1-|z|^2)^{\beta-1}$, $\b >0$, so
an appropriate interpretation of the  Littlewood-Paley estimate 
\eqref{LP2fracHL}  is
$$ \|f\|_{A^p_\om}^p\asymp\int_\D|D^{\mu}(f)(z)|^p\widehat{\mu}(z)^p\om(z)\,dA(z),\quad f\in\H(\D),$$
when $\mu$ is a standard weight.

Our main result shows that the discussion above regarding standard weights actually describes a general phenomenon rather than a particular case, and moreover  describes the radial weights such that the formula holds.

\begin{theorem}\label{th:L-P-D}
Let $\om$ be a radial weight, $0<p<\infty$ and $\mu\in\DDD$. Then
    \begin{equation}\label{Eq:L-P-D}
    \|f\|_{A^p_\om}^p\asymp\int_\D|D^{\mu}(f)(z)|^p\widehat{\mu}(z)^p\om(z)\,dA(z),\quad f\in\H(\D),
    \end{equation}
if and only if $\om\in\DDD$.
\end{theorem}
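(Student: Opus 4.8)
The plan is to recast the claimed equivalence as a two-sided boundedness statement for a single integral operator and then to split the two bounds between the two defining conditions of $\DDD$. Writing $g=D^{\mu}(f)$, so that $\widehat{f}(n)=\mu_{2n+1}\widehat{g}(n)$, a term-by-term computation gives the representation $f(z)=\int_0^1 r\mu(r)\,g(r^2z)\,dr=:T_\mu g(z)$. Hence \eqref{Eq:L-P-D} is equivalent to the two-sided estimate $\|T_\mu g\|_{A^p_\om}^p\asymp\int_\D|g(z)|^p\widehat{\mu}(z)^p\om(z)\,dA(z)$ for all $g\in\H(\D)$; that is, $T_\mu$ must be bounded both above and below as a map from the space with weight $\widehat{\mu}^p\om$ into $A^p_\om$. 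To fix expectations I would first treat $p=2$, where orthogonality of $\{z^n\}$ collapses both sides to moment sums and reduces the whole equivalence to the single scalar estimate $\int_0^1 r^{2n+1}\widehat{\mu}(r)^2\om(r)\,dr\asymp\mu_{2n+1}^2\,\om_{2n+1}$, valid for $\mu,\om\in\DDD$. This identifies the governing phenomenon, namely that the moments of the product weight $\widehat{\mu}^p\om$ localize at $r\approx 1-1/n$ and factor, and it isolates the asymptotics $\mu_{2n+1}\asymp\widehat{\mu}(1-1/n)$ that I will use throughout.

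For the sufficiency ($\om\in\DDD$) I would prove the two bounds separately. The lower bound of \eqref{Eq:L-P-D}, namely $\int_\D|g|^p\widehat{\mu}^p\om\,dA\lesssim\|f\|_{A^p_\om}^p$ (that is, $T_\mu$ bounded below), is exactly the companion upper-bound estimate for $D^{\mu}$, which I expect to be governed by $\om\in\DD$; I would establish it by the testing-and-kernel technique that proves the analogous integer-derivative bound \eqref{LP2intro}. The new content is the upper bound $\|T_\mu g\|_{A^p_\om}^p\lesssim\int_\D|g|^p\widehat{\mu}^p\om\,dA$, which I expect to be governed by $\om\in\Dd$. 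For $p\ge1$, Minkowski's inequality applied to the representation gives $M_p(\rho,T_\mu g)\le\int_0^1 r\mu(r)M_p(r^2\rho,g)\,dr$, and, since $\rho\mapsto M_p(\rho,g)$ is nondecreasing, integrating against $\rho\,\om(\rho)$ reduces the claim to a one-dimensional weighted (Hardy-type) inequality for this integral operator acting on increasing functions. The crux is that, because $\mu\in\DDD$, the tail of the kernel $\int_r^1 s\mu(s)\,ds$ is comparable to $\widehat{\mu}(r)$ and concentrates near the diagonal, while the reverse-doubling of $\widehat{\om}$ coming from $\om\in\Dd$ supplies exactly the balance that makes this inequality hold with the weight $\widehat{\mu}^p\om$ on the right.

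For the necessity I would argue by test functions, exploiting that the two bounds are detected by different families. Testing \eqref{Eq:L-P-D} with monomials $f=z^N$ already forces the scalar comparison $\om_{Np+1}\asymp\mu_{2N+1}^{-p}\int_0^1 r^{Np+1}\widehat{\mu}^p\om\,dr$ for all $N$; to promote this to the full conclusion I would use boundary-concentrated functions of the type $f_a(z)=(1-\overline{a}z)^{-\gamma}$, $|a|\to1$, whose $A^p_\om$-norms are controlled by $\widehat{\om}(a)$, in order to extract the doubling condition $\om\in\DD$ from the lower bound, and lacunary series $f(z)=\sum_k z^{n_k}$ with $n_{k+1}/n_k\ge\lambda>1$, whose weighted norms are equivalent to square sums of moments, in order to extract the reverse-doubling condition $\om\in\Dd$ from the upper bound.

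The main obstacle is the upper bound $\|T_\mu g\|_{A^p_\om}\lesssim\|\widehat{\mu}\,g\|_{A^p_\om}$ in the range $0<p<1$, where Minkowski's inequality is unavailable and the direct reduction to a one-dimensional Hardy inequality breaks down. Here I would replace the integral Minkowski step by a pointwise estimate for $|T_\mu g|^p$ obtained from the subharmonicity of $|g|^p$ together with a partition of $[0,1)$ into the tail intervals adapted to $\widehat{\mu}$, recovering an analogous discrete Hardy inequality. Verifying that this substitute still produces precisely the weight $\widehat{\mu}^p\om$ with no loss, and confirming that the sharp threshold for the upper bound is indeed $\om\in\Dd$, is the delicate part of the argument, and it is where the hypothesis $\mu\in\DDD$ and the regularity of its moments are used most heavily.
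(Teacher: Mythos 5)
Your outline has the right global architecture (split \eqref{Eq:L-P-D} into two one-sided inequalities, prove sufficiency for each, and recover the weight conditions by testing), and the identity $f=T_\mu(D^\mu f)$ with $T_\mu g(z)=\int_0^1 r\mu(r)g(r^2z)\,dr$ together with the $p=2$ moment reduction are correct and genuinely useful sanity checks. But as a proof it has a real gap: the two steps you defer are precisely the ones that carry all the difficulty, and the mechanisms you propose for them are either unsubstantiated or aim at the wrong condition. For the inequality $\|D^\mu f\|^p_{A^p_{\om\mug^p}}\lesssim\|f\|^p_{A^p_\om}$ you invoke ``the testing-and-kernel technique'' behind \eqref{LP2intro}; that technique rests on the Cauchy formula for $f^{(n)}$ (plus Minkowski for $p\ge1$ and $H^p$ factorization for $p<1$), and none of this is available for $D^\mu$. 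The paper's substitute is the integral-means estimate \eqref{eq:imdu}, $M_p(r,D^\mu f)\lesssim M_p(\rho,f)/\mug(r/\rho)$, proved in Proposition~\ref{pr:integralmeans} by realizing $V_n*(D^\mu f)_r$ as $W_1^{\Phi_n}*V_n*f_\rho$ for suitably constructed smooth symbols $\Phi_n$ and using the universal Ces\'aro basis bounds of Theorem~\ref{Th Polinomios Cesaro}; without some such device your lower bound for $T_\mu$ is unproved for $0<p<1$. Symmetrically, for the reverse inequality you yourself flag the range $0<p<1$ as ``the main obstacle'' and offer only a hoped-for subharmonicity argument; the paper avoids the issue entirely by establishing the norm decomposition $\|f\|_{A^p_\eta}^p\asymp\sum_n\eta_{k^n}\|V_{n,k}*f\|_{H^p}^p$ for all $0<p<\infty$ (Proposition~\ref{desc. norma bloques momentos y Vn}), applying it to both $\om$ and $\om\mug^p$ with a common $k$, and comparing blockwise via the moment inequality $\om_{k^n}\mu_{k^{n-1}}^p\lesssim(\om\mug^p)_{k^n}$, which is where $\om\in\Dd$ actually enters.

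The necessity half also drifts from what can be proved. You expect the upper bound $\|f\|_{A^p_\om}^p\lesssim\|D^\mu f\|^p_{A^p_{\om\mug^p}}$ to be ``governed by $\om\in\Dd$'' and plan to extract $\Dd$ via lacunary series. The paper does not establish (and does not need) this: Theorem~\ref{th:inverseM} extracts only the weaker condition $\om\in\M$ from that inequality, using monomials alone and a careful analysis of the level points $r_x,s_x$ of $\mug$, and then concludes via the identity $\DD\cap\M=\DDD$ from \cite{PR19}. Since $\M$ strictly contains $\Dd\cap\DD$-type information only in combination with $\DD$, claiming $\Dd$ as the exact threshold for the single upper bound is a stronger assertion that your sketch does not support. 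Likewise, boundary test functions $(1-\bar az)^{-\gamma}$ are unnecessary: the other implication (Theorem~\ref{theorem:L-P-D-hat}) already shows that \eqref{eq:desLP} restricted to monomials forces $\om\in\DD$. In short, the skeleton is compatible with the paper's, but the proposal leaves unproved exactly the two technical pillars (the Ces\'aro-basis integral-means estimate and the all-$p$ norm decomposition) and substitutes an incorrect target ($\Dd$ instead of $\M$) in the necessity step.
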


In particular, as a byproduct of Theorem~\ref{th:L-P-D}  we obtain a proof of the folklore result
$$\| f\|^p_{A^p_\alpha}\asymp \int_\D|D^{\beta} (f)(z)|^p(1-|z|)^{\beta p+\alpha}\,dA(z),\quad f\in\H(\D),$$
for any $\beta,p>0$, and $\alpha>-1$. Here and throughout the paper 
$A^p_\alpha$ stands for the classical weighted Bergman spaces
induced by the standard radial weight
$\omega(z)=(\alpha+1)(1-|z|^2)^\alpha$.

En route  to the proof of Theorem~\ref{th:L-P-D} 
we will establish the following result,
which generalizes \cite[Theorem~5]{PR19} to the setting of fractional derivatives
induced by radial doubling weights.

\begin{theorem}\label{theorem:L-P-D-hat}
Let $\om$ be a radial weight, $0<p<\infty$ and $\mu\in\DDD$. Then,  there exists a constant $C=C(\om,\mu,p)>0$ such that
    \begin{equation}\label{eq:desLP}
   \int_\D|D^{\mu}(f)(z)|^p\widehat{\mu}(z)^p\om(z)\,dA(z)\le C\|f\|_{A^p_\om}^p,\quad f\in\H(\D),
    \end{equation}
     if and only if $\om\in\DD$.
\end{theorem}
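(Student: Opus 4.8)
The plan is to work with $g=D^{\mu}(f)$ and to exploit the moment asymptotics for doubling weights. Since $\mu\in\DDD\subset\DD$, I would first record the estimate $\mu_{2n+1}\asymp\widehat{\mu}\bigl(1-\tfrac1n\bigr)$ together with the two-sided regularity of $\widehat{\mu}$ coming from $\mu\in\DDD$; these reduce all the quantities $\mu_{2n+1}^{-1}$ and $\widehat{\mu}(z)$ to comparable, slowly varying expressions. A useful orientation is the identity $f(z)=\int_0^1 s\,\mu(s)\,g(s^2z)\,ds$, valid for $g=D^{\mu}(f)$, which exhibits $f$ as a $\mu$-average of $g$; this shows that passing from $f$ to $D^{\mu}(f)$ is a differentiation-type (frequency amplifying) operation, and it explains why the inequality \eqref{eq:desLP} should cost exactly the doubling hypothesis on $\om$, while the reverse inequality (the one appearing in Theorem~\ref{th:L-P-D}) is governed by $\Dd$.

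For the sufficiency, assume $\om\in\DD$. I would represent $D^{\mu}$ as an integral operator, writing $D^{\mu}(f)(z)=\int_{\D}f(w)K(z,w)\,dA(w)$ with $K(z,w)=\sum_{n\ge0}\frac{n+1}{\mu_{2n+1}}(z\overline{w})^n$. Using $\mu_{2n+1}\asymp\widehat{\mu}(1-\tfrac1n)$ and the regularity of $\widehat{\mu}$, the next step is to prove a pointwise kernel bound of the form
\[
\widehat{\mu}(z)\,\sabs{D^{\mu}(f)(z)}\lesssim\int_{\D}\sabs{f(w)}\,H(z,w)\,dA(w),
\]
where $H$ is a standard Bergman-type kernel comparable to $\widehat{\mu}(z)\,\sabs{K(z,w)}$. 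One then concludes by the boundedness of such integral operators on $L^p_\om$ for $\om\in\DD$: for $p\ge1$ this is a Schur-test argument with a test weight built from $\widehat{\om}$ and $1-\sabs{z}$, while for $0<p<1$ one discretizes over a Whitney lattice and uses the subharmonicity of $\sabs{f}^p$ together with $\ell^p$-subadditivity, both of which are standard in the doubling-weight calculus.

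For the necessity, I would assume \eqref{eq:desLP} and test it against a family that spreads its spectrum across all scales, e.g.\ $f_a(z)=(1-\overline{a}z)^{-\gamma}$ for a suitable $\gamma$ (or the reproducing kernels of $A^2_\mu$), rather than monomials, which only probe a single annulus and essentially force equality in \eqref{eq:desLP}. Evaluating both sides on $f_a$ and letting $\sabs{a}\to1$, the known asymptotics of the integrals $\int_\D\sabs{f_a}^p\om\,dA$ and $\int_\D\widehat{\mu}(z)^p\sabs{D^{\mu}(f_a)(z)}^p\om(z)\,dA(z)$, again via $\mu_{2n+1}\asymp\widehat{\mu}(1-\tfrac1n)$, should turn \eqref{eq:desLP} into a tail inequality equivalent to $\widehat{\om}(r)\lesssim\widehat{\om}(\tfrac{1+r}2)$, that is, $\om\in\DD$.

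The main obstacle is the kernel estimate in the sufficiency step: unlike the integer-order case \eqref{LP2intro}, where $(1-\sabs z)\sabs{f'(z)}$ is controlled locally by a Cauchy estimate on a Whitney disc, $D^{\mu}$ is nonlocal, so the bound for $\widehat{\mu}(z)\,\sabs{D^{\mu}(f)(z)}$ must be extracted from the global kernel $K$ and made uniform both over $\mu\in\DDD$ and over the full range $0<p<\infty$.
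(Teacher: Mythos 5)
Your proposal has genuine gaps in both directions. For the sufficiency, the entire argument hinges on an unproved pointwise kernel estimate: you would need $\mug(z)\sabs{K(z,w)}\lesssim H(z,w)$ for a ``standard Bergman-type kernel'' $H$, where $K(z,w)=\sum_{n\ge0}\frac{n+1}{\mu_{2n+1}}(z\overline{w})^n$. For a general $\mu\in\DDD$ such off-diagonal pointwise upper bounds are not available; what is known for kernels induced by doubling weights are estimates of integrals of $\sabs{K(z,\cdot)}$ over $\D$, and those are themselves obtained by decomposing $K$ with smooth polynomial blocks --- so the Schur-test route presupposes machinery at least as heavy as what it is meant to replace. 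Moreover, even granting such an $H$, a positive integral operator is in general unbounded on $L^p_\om$ for $0<p\le1$; the subharmonicity/lattice argument you invoke requires precisely the kind of local-to-global control that the nonlocality of $D^{\mu}$ destroys (as you yourself note in your last paragraph). The paper circumvents all of this by proving instead the integral-means inequality $M_p(r,D^{\mu}f)\le C\,M_p(\rho,f)/\mug(r/\rho)$ (Proposition~\ref{pr:integralmeans}), valid uniformly for $0<p<\infty$, via the universal Ces\'aro basis $\{V_n\}$ and the multiplier bounds of Theorem~\ref{Th Polinomios Cesaro}; the passage to \eqref{eq:desLP} is then a one-variable integration by parts using $\om\in\DD$. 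This is the step your outline is missing, and it cannot be bypassed by a kernel bound.

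For the necessity, your dismissal of monomials is mistaken, and your replacement introduces a circularity. Testing \eqref{eq:desLP} on $f_n(z)=z^n$ does \emph{not} force equality: the two sides differ by the factor $\bigl(\mug(s)/\mu_{2n+1}\bigr)^p$, which is large near the origin and small near the boundary, and the paper shows that the resulting family of moment inequalities, after locating the crossing radii $s_x,r_x$ where $\mug(s)/\mug(1-\tfrac1x)$ hits fixed thresholds and applying Fubini's theorem together with $\mu\in\Dd$, yields exactly $\omg(r)\lesssim\omg(\tfrac{1+r}{2})$. Indeed the introduction records that \eqref{eq:desLP} holds if and only if it holds for monomials. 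By contrast, your test family $f_a(z)=(1-\overline{a}z)^{-\gamma}$ requires the asymptotics of $\int_\D\sabs{f_a}^p\om\,dA$ as $\sabs{a}\to1$, and those asymptotics are only known under regularity hypotheses on $\om$ of the very type ($\om\in\DD$) you are trying to establish; for an arbitrary radial weight $\om$ they are simply not available. Computing $D^{\mu}(f_a)$ for a general $\mu\in\DDD$ runs into the same missing kernel estimates as above. So as written, neither implication goes through.
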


The proof of \eqref{eq:desLP} 
 of 
 is strongly based on  
  the following inequality between the integral means of order $p$ of $D^{\mu} f$ and $f$, 
 \begin{equation}\label{eq:imdu} 
 M_p(r, D^{\mu} f)\leq C \frac{M_p(r,f)}{\mug\left(\frac{r}{\r}\right)}, \; 0<r<\r<1, \;0<p<\infty.
 \end{equation}
 The inequality \eqref{eq:imdu} is proved in Proposition \ref{pr:integralmeans} below
and it 
  is a natural extension of 
  \cite[Lemma~3.1]{PavP}. The proof of this last result  employes the Cauchy formula for  $f'$,  Minkowski's inequality for the case 
  $p\ge 1$ and factorization results of $H^p$ functions
  when $0<p<1$.
   However, 
   the proof of \eqref{eq:imdu}
   is strongly  based on smooth properties of universal Ces\'aro basis of polynomials introduced by  Jevti\'c  and Pavlovi\'c  \cite{JevPac98}.

Reciprocally, the other implication in the proof of
Theorem \ref{theorem:L-P-D-hat} uses ideas  from
 \cite[Theorem 6]{PR19} and  some technicalities. In particular,
 the proof reveals that \eqref{eq:desLP} holds if and only if the inequality there holds for all monomials only.

As for the proof of Theorem~\ref{th:L-P-D} we show, in Theorem~\ref{th:inverseM} below, that the  inequality
\begin{equation}\label{eq:revdes}
   \|f\|_{A^p_\om}^p \leq C\int_\D|D^{\mu}(f)(z)|^p\widehat{\mu}(z)^p\om(z)\,dA(z),\quad f\in\H(\D),
    \end{equation}
implies   that  $\om \in \M$. Recall that $\om\in\M$ if there exist constants $C=C(\om)>1$ and $k=k(\om)>1$ such that $\om_{x}\ge C\om_{kx}$ for all $x\ge1$. It is known that $\Dd\subset \M$ \cite[Proof of Theorem~3]{PR19} but $\Dd\not\subset \M$
 \cite[Proposition~14]{PR19}. However, \cite[Theorem~3]{PR19} ensures that $\DDD=\DD\cap\Dd=\DD\cap\M$,
 so Theorem~\ref{th:inverseM} together with Theorem \ref{theorem:L-P-D-hat} yields that $\om\in\DDD$ when
 \eqref{Eq:L-P-D} holds.

Concerning the reverse implication in the proof of Theorem~\ref{th:L-P-D}, we   construct ad hoc norms for the  weighted 
Bergman spaces $A^p_{\om}$ and $A^p_{\om \mug^p}$,  in the spirit of the decomposition results
\cite[Theorem 7.5.8]{Pabook} and \cite[Theorem~4]{PelRathg}. These two results are valid for $1<p<\infty$
and their proofs employ the boundedness of the Riesz projection on $L^p(\partial\D)$ to deal with the $H^p$-norm of polynomials of the type
$\Delta_{n_1,n_2}z=\sum_{k=n_1}^{n_2}z^k$.
However we use 
 universal Ces\'aro basis of polynomials instead of the polynomyals 
$\Delta_{n_1,n_2}$, their smooth properties allow us to get equivalent norms
 for any $0<p<\infty$.
Being precise, we  prove that
  there is  $k>1$ and a 
  shared  universal Ces\'aro basis of polynomials $\{V_{n,k}\}_{n=0}^\infty$ such that
 $\|f\|_{A^p_\omega}\asymp \sum\limits_{n=0}^{\infty} \omega_{k^n}\Vert V_{n,k}\ast f\Vert_{H^p}^p$ and 
 $\|D^\mu(f)\|_{A^p_{\omega\mug^p}}\asymp \sum\limits_{n=0}^{\infty} ({\omega\mug^p})_{k^n}\Vert V_{n,k}\ast f\Vert_{H^p}^p$
 for any $p\in (0,\infty)$,
 where  $\ast$ denotes the convolution.

Finally, we introduce the following notation that has already been used above in the introduction. The letter $C=C(\cdot)$ will denote an absolute constant whose value depends on the parameters indicated
in the parenthesis, and may change from one occurrence to another.
We will use the notation $a\lesssim b$ if there exists a constant
$C=C(\cdot)>0$ such that $a\le Cb$, and $a\gtrsim b$ is understood
in an analogous manner. In particular, if $a\lesssim b$ and
$a\gtrsim b$, then we write $a\asymp b$ and say that $a$ and $b$ are comparable.

\section{Preliminary results}

\subsection{Radial weights}
In this section we provide several characterizations of the classes of radial weight $\DD,\Dd$ and $\mathcal{M}$,
which will be used in the proofs of the main results of this paper.

For each $\b>0$ and $\om$ a radial weight, let us denote $\om_{[\b]}(s)=(1-s)^\beta\om(s).$
The next result gathers descriptions of the class $\DD$. 
\begin{letterlemma}
\label{caract. pesos doblantes}
Let $\om$ be a radial weight. Then, the following statements are equivalent:
\begin{itemize}
    \item[(i)] $\om \in \DD$;
    \item[(ii)] There exist $C=C(\om)\geq 1$ and $\a_0=\a_0(\om)>0$ such that
    $$ \omg(s)\leq C \left(\frac{1-s}{1-t}\right)^{\a}\omg(t), \quad 0\leq s\leq t<1;$$
    for all $\a\geq \a_0$;
    \item[(iii)]  
    $$ \om_x=\int_0^1 s^x \om (s) ds\asymp \omg\left(1-\frac{1}{x}\right),\quad x \in [1,\infty);$$
    \item[(iv)] There exists $C(\om)>0$ such that $\om_n\le C \om_{2n}$, for any $n\in \N$;
    \item[(v)] There exist $C(\om)>0$ and $\eta(\om)>0$ such that 
    $$\om_x\le C\left( \frac{y}{x}\right)^{\eta}\om_y,\quad 0<x\le y<\infty \; ;$$
    \item[(vi)] For some (equivalently for each) $\b >0$ there exists a constant $C=C(\om, \b)>0$ such that $x^{\b}(\om_{[\b]})_x\leq C \om_x, \, 0< x < \infty$.
\end{itemize}
\end{letterlemma}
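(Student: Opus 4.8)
The plan is to prove the six statements equivalent through a short cycle organized around the moment--tail comparison (iii), which I regard as the hub: everything passes through the estimate $\om_x\asymp\omg(1-1/x)$.

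I would first dispatch (i)$\Leftrightarrow$(ii) by iteration. For (i)$\Rightarrow$(ii), set $s_0=s$ and $s_{j+1}=\frac{1+s_j}{2}$, so that $1-s_j=2^{-j}(1-s)$; the defining inequality gives $\omg(s)\le C^{\,n}\omg(s_n)$, and choosing $n=\lceil\log_2\frac{1-s}{1-t}\rceil$ makes $s_n\ge t$, whence $\omg(s)\le C\big(\tfrac{1-s}{1-t}\big)^{\log_2 C}\omg(t)$; since $\frac{1-s}{1-t}\ge1$ this persists for every $\a\ge\a_0:=\log_2 C$. For (ii)$\Rightarrow$(i) one simply takes $t=\frac{1+s}{2}$. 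For the hub (ii)$\Rightarrow$(iii) the lower bound $\om_x\gtrsim\omg(1-1/x)$ is free: restrict the integral to $[1-1/x,1]$ and use $(1-1/x)^x\gtrsim1$. The upper bound is the engine of the whole lemma: integrating by parts gives $\om_x=x\int_0^1 s^{x-1}\omg(s)\,ds$; splitting at $1-1/x$, the outer piece is $\le\omg(1-1/x)$, while on $[0,1-1/x]$ I insert the bound from (ii), $\omg(s)\le C(x(1-s))^\a\omg(1-1/x)$, and the resulting integral is controlled by the Beta asymptotics $\int_0^1 s^{x-1}(1-s)^\a\,ds=\frac{\G(x)\G(\a+1)}{\G(x+\a+1)}\asymp x^{-\a-1}$, which exactly cancels the prefactor $x^{1+\a}$.

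Next I would close the core by (iii)$\Rightarrow$(i) with a tail--splitting trick: write $\omg(1-1/x)=\int_{1-1/x}^{1-1/(2x)}\om(s)\,ds+\omg(1-1/(2x))$. On the middle layer $s\ge1-1/x$, so $(1-1/x)^{2x}\int_{1-1/x}^{1-1/(2x)}\om\le\om_{2x}$, i.e. the layer is $\lesssim\om_{2x}$; applying the \emph{upper} half of (iii) at the index $2x$ gives $\om_{2x}\lesssim\omg(1-1/(2x))$, and hence $\omg(1-1/x)\lesssim\omg(1-1/(2x))$, which is (i) at $r=1-1/x$ (small $r$ being trivial). This settles (i)$\Leftrightarrow$(ii)$\Leftrightarrow$(iii). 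The polynomial bound (v) then follows from (ii) applied to the comparable points $1-1/x\le 1-1/y$ (whose ratio $\frac{1-s}{1-t}$ equals $y/x$) together with (iii) at both ends, giving $\om_x\asymp\omg(1-1/x)\le C(y/x)^{\a}\omg(1-1/y)\asymp\om_y$; and (iv) is the special case $y=2n$. For (vi) I would first note that $\om_{[\b]}$ inherits membership in $\DD$, so (iii) applies to it and yields $(\om_{[\b]})_x\asymp\widehat{\om_{[\b]}}(1-1/x)\asymp x^{-\b}\omg(1-1/x)\asymp x^{-\b}\om_x$, whence $x^{\b}(\om_{[\b]})_x\asymp\om_x$; this simultaneously gives the $\le$ in (vi) for every $\b$ and, read backwards, shows that a single $\b$ suffices, explaining the ``some $\Leftrightarrow$ each''.

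The step I expect to be the main obstacle is the family of \emph{reverse} implications, i.e. deducing doubling from a purely moment-theoretic hypothesis such as (iv), (v) or (vi). The essential content of each of these is the upper estimate $\om_x\lesssim\omg(1-1/x)$, and the only soft tools available --- monotonicity of $x\mapsto\om_x$ and the free bound $\omg(1-1/x)\lesssim\om_x$ --- all produce inequalities of the form (tail or moment)$\,\lesssim\,$moment, never moment$\,\lesssim\,$tail, so none of them alone can close the gap; a non-crude input is unavoidable. My strategy is therefore to route every converse through the two-sided (iii): from (v) one obtains $\om_n\asymp\om_{n+1}$, hence $\om_x\asymp\om_{\lfloor x\rfloor}$, so the integer bound in (iv) upgrades to the real-variable polynomial bound, and then the tail--splitting identity above, combined with the integration-by-parts engine run in the delicate moment$\,\to\,$tail direction, recovers the upper half of (iii). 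Making this last passage rigorous --- rather than circular --- is the crux of the argument and the place where the uniformity of the hypothesis over \emph{all} scales must genuinely be exploited.
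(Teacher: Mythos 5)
Most of your cycle is sound and follows the standard arguments (the dyadic iteration for (i)$\Leftrightarrow$(ii), the integration-by-parts/Beta-function estimate for (ii)$\Rightarrow$(iii), the tail-splitting for (iii)$\Rightarrow$(i), and (iv), (v) as corollaries); note, however, that the paper simply cites \cite{PelSum14} for (i)--(v) and \cite{PR19} for (vi)$\Rightarrow$(i), and the \emph{only} implication it writes out is (i)$\Rightarrow$(vi) --- which is precisely where your argument breaks. Your route is: $\om_{[\b]}\in\DD$, hence $(\om_{[\b]})_x\asymp\widehat{\om_{[\b]}}(1-1/x)\asymp x^{-\b}\omg(1-1/x)\asymp x^{-\b}\om_x$. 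If that chain were valid it would give $\om_x\le Cx^{\b}(\om_{[\b]})_x$ for every $\om\in\DD$, i.e.\ $\DD\subset\M$ by Lemma~\ref{caract. M}, and then $\DD=\DD\cap\M=\DDD$ by \cite[Theorem~3]{PR19} --- false, since the rapidly increasing weights of the introduction lie in $\DD\setminus\DDD$. Concretely two links fail: $\om\in\DD$ does \emph{not} imply $\om_{[\b]}\in\DD$ (take $\om$ with mass $2^{-n}$ concentrated in tiny intervals near $1-\rho_n$ with $\rho_{n+1}/\rho_n\to 0$ extremely fast; then $\om\in\DD$, but $\widehat{\om_{[\b]}}(1-\rho_n)/\widehat{\om_{[\b]}}(1-\rho_n/2)\approx 2(\rho_n/\rho_{n+1})^{\b}\to\infty$), and the lower bound $\widehat{\om_{[\b]}}(t)\gtrsim(1-t)^{\b}\omg(t)$ requires $\Dd$, not $\DD$; only the trivial upper bound $\widehat{\om_{[\b]}}(t)\le(1-t)^{\b}\omg(t)$ survives. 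The paper's proof of (i)$\Rightarrow$(vi) therefore works directly on the moment: it splits $x^{\b}(\om_{[\b]})_x$ at $1-\frac1x$, bounds the outer piece by $\om_x$ trivially, and on $[0,1-\frac1x]$ applies Fubini, the estimate (ii), and the Beta asymptotics --- essentially your ``(ii)$\Rightarrow$(iii) engine'', but run on $(\om_{[\b]})_x$ itself rather than routed through a false membership of $\om_{[\b]}$ in $\DD$.

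The converse implications are also not established in your proposal. Reading a two-sided $\asymp$ ``backwards'' is not a proof of (vi)$\Rightarrow$(i), especially since the $\asymp$ is in fact only one-sided; and you explicitly leave the moment-to-tail passage ((iv) or (v) implies the upper half of (iii)) as ``the crux'' without supplying it. For the record, that step is a short absorption argument rather than a deep obstacle: from (v), $\om_x\le CN^{\eta}\om_{Nx}\le CN^{\eta}\bigl[(1-\tfrac1x)^{(N-1)x}\om_x+\omg(1-\tfrac1x)\bigr]\le CN^{\eta}e^{-(N-1)}\om_x+CN^{\eta}\omg(1-\tfrac1x)$, and choosing $N$ with $CN^{\eta}e^{1-N}\le\tfrac12$ gives $\om_x\lesssim\omg(1-\tfrac1x)$, after which your tail-splitting yields (i). The paper sidesteps all of this by citation, so your plan is more ambitious in scope; but as written it proves neither the one implication the paper actually proves nor the converses it undertakes to handle.
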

\begin{proof}
The equivalences (i)--(v) can be found in \cite[Lemma~2.1]{PelSum14} and
(i)$\Leftrightarrow$(vi) is proved in \cite[Theorem~6]{PR19}, 
 where it is provided  a direct proof of (vi)$\Rightarrow$(i), but (i)$\Rightarrow$(vi) 
 is obtained by using the Littlewood-Paley inequality \cite[(1.5)]{PR19}.
 So, here we  give a detailed direct proof of (i)$\Rightarrow$(vi)
 for the convenience of the reader and the sake of completeness.

Let $\b>0$, $\om \in \DD$ and $x>0$. 
Observe that
$$x^{\b}(\om_{[\b]})_x= x^{\b}\int_0^{1-\frac{1}{x}} r^x (1-r)^{\b} \om (r)dr + x^{\b}\int_{1-\frac{1}{x}}^1 r^x (1-r)^{\b} \om (r)dr =I+II. $$
By  Fubini's theorem and Lemma \ref{caract. pesos doblantes}~(ii)
\begin{align*}
 I & = x^{\b +1}\int_0^{1-\frac{1}{x}} (1-r)^{\b} \left( \int_0^r s^{x-1}ds \right) \om (r)dr 
\\ & = x^{\b+1}\int_0^{1-\frac{1}{x}} s^{x-1} \left( \int_s^{1-\frac{1}{x}} (1-r)^{\b} \om (r)dr \right) ds 
 \\& \leq  x^{\b+1}\int_0^{1-\frac{1}{x}} s^{x-1} (1-s)^{\b} \omg (s) ds
 \\ & \lesssim x^{\b+\a+1}\omg\left(1-\frac{1}{x}\right)\int_0^{1-\frac{1}{x}} s^{x-1}  (1-s)^{\b+\a} ds
\\ & \lesssim \omg\left(1-\frac{1}{x}\right)\le \om_x.
\end{align*}
 Moreover, it is easy to observe that $II\leq  \om_x$. This finishes the proof.
\end{proof}

We will also need the following characterizations of the class $\Dd$.
\begin{letterlemma}
\label{caract. D check}
Let $\om$ be a radial weight. The following statements are equivalent:
\begin{itemize}
\item[(i)] $\om	\in \Dd$;
\item[(ii)] There exist $C=C(\om)>0$ and $\b=\b(\om)>0$ such that
$$\omg(s)\leq C \left(\frac{1-s}{1-t}\right)^{\b}\omg(t), \quad 0\leq  t\leq s<1;$$
\item[(iii)]There exist $k=k(\om)>1 $ and $C=C(\om)>0 $ such that
\begin{equation}
\label{D chek y k}
\int_r^{1-\frac{1-r}{k}}\om (s)ds \geq C \omg(r), \; 0\leq r<1.
\end{equation}
\end{itemize}
\end{letterlemma}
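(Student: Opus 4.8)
The plan is to route everything through statement (i), establishing first the trivial reformulation (i)~$\Leftrightarrow$~(iii) and then the substantive equivalence (i)~$\Leftrightarrow$~(ii). For (i)~$\Leftrightarrow$~(iii) I would simply rewrite the integral in (iii) as a difference of tails: for any fixed $k>1$,
$$\int_r^{1-\frac{1-r}{k}}\om(s)\,ds=\omg(r)-\omg\Bigl(1-\tfrac{1-r}{k}\Bigr),\quad 0\le r<1.$$
Hence the inequality in (iii) with constant $C$ says, \emph{for the same} $k$, that $\omg(1-\frac{1-r}{k})\le(1-C)\omg(r)$; since the standing assumption $\omg>0$ forces $0<C<1$, this is in turn equivalent to $\omg(r)\ge\frac{1}{1-C}\,\omg(1-\frac{1-r}{k})$, which is (i) with constant $\frac{1}{1-C}>1$ and the same $k$. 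Reading the chain backwards gives the converse, so no iteration is needed here.

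For (ii)~$\Rightarrow$~(i) I would specialise the estimate in (ii) to the pair $t=r$ and $s=1-\frac{1-r}{k}$, for which $\frac{1-s}{1-t}=\frac1k$. This yields $\omg(1-\frac{1-r}{k})\le C\,k^{-\b}\,\omg(r)$, and choosing $k$ large enough that $C\,k^{-\b}\le\frac12$ (for instance $k=(2C)^{1/\b}$) produces $\omg(r)\ge 2\,\omg(1-\frac{1-r}{k})$, which is exactly (i).

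The main obstacle is the reverse implication (i)~$\Rightarrow$~(ii), which I would settle by a telescoping iteration. Writing $C_0>1$ and $k>1$ for the constants in (i), I would follow the forward orbit $r_j=1-\frac{1-t}{k^j}$ of the starting point $t$ under the map $r\mapsto 1-\frac{1-r}{k}$, so that $1-r_j=(1-t)k^{-j}$ and $r_j\uparrow 1$. Iterating (i) along this orbit gives $\omg(r_j)\le C_0^{-j}\,\omg(t)$. Given $s\in[t,1)$, I would pick the unique $j\ge 0$ with $r_j\le s<r_{j+1}$; monotonicity of $\omg$ then yields $\omg(s)\le\omg(r_j)\le C_0^{-j}\omg(t)$, while the position of $s$ gives $\frac{1-s}{1-t}>k^{-(j+1)}$. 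The decisive bookkeeping is to take $\b=\frac{\log C_0}{\log k}$, so that $k^{\b}=C_0$; then $\bigl(\frac{1-s}{1-t}\bigr)^{\b}>C_0^{-(j+1)}$, and combining the two estimates gives $\omg(s)\le C_0\bigl(\frac{1-s}{1-t}\bigr)^{\b}\omg(t)$, i.e.\ (ii) with $C=C_0$. The only delicate point is matching the exponent $\b$ to the geometric decay rate $C_0$ across the scale $k^{-j}$; once $\b$ is pinned down this way, all constants fall into place and the remaining steps are routine.
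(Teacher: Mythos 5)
Your proposal is correct and follows essentially the same route as the paper: (iii) is unwound as the tail-difference reformulation of (i), (ii)$\Rightarrow$(i) is the same specialisation to $s=1-\frac{1-r}{k}$ with $k$ large, and (i)$\Rightarrow$(ii) is the same geometric iteration, with the minor (and tidy) difference that you anchor the orbit at $t$ itself rather than at the fixed grid $r_n=1-k^{-n}$, which spares you the paper's separate treatment of the case where $t$ and $s$ fall in the same grid interval.
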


\begin{proof}
The condition (iii) is just a reformulation of the definition of the class $\Dd$, so 
we omit the proof of (i)$\Leftrightarrow$(iii).
Next, assume that (i) holds and 
consider the sequence $\{r_n\}_{n=0}^\infty=\{1-\frac{1}{k^n}\}_{n=0}^\infty$.
  If $0 \leq t\leq s <1$, there exist $m, \, n \in \N\cup\{0\}$, $m \geq n$ such that $r_n \leq t < r_{n+1}$ and $r_m \leq s < r_{m+1}$. 
If $n+1\le m$, then  
\begin{align*}
 \omg(s)&\leq \omg(r_m)\leq \frac{1}{C} \omg (r_{m-1})\leq \dots\leq \frac{1}{C^{m-n-1}}\omg(r_{n+1})\leq \frac{C^2}{k^{(m-n+1)\log_k C}} \omg(t)
 \\ & \leq C^2 \left(\frac{1-s}{1-t}\right)^{\log_k C} \omg(t).
\end{align*}
Next, if $m=n$, then for any constant $C_1\ge k^\beta$
$$\frac{\omg(s)}{\omg(t)}\le 1\le C_1\frac{1}{k^\beta}\le C_1\left( \frac{1-r_{n+1}}{1-r_n} \right)^\beta\le C_1
\left( \frac{1-s}{1-t} \right)^\beta,$$
so (ii) holds for any exponent $\beta\in ( 0, \log_k C]$.
 Reciprocally, assume (ii) and let be $k>1$. By (ii) there exist $C=C(\om )>0$ and $\b=\b(\om)>0$ such that $\frac{C}{k^{\b}}\omg(r)\geq \omg \left(1-\frac{1-r}{k}\right)$. So taking $k >C^{\frac1{\b}}$, (i) holds. This finishes the proof.

\end{proof}

A proof of the following description of the  weights $\om\in\M$, 
in terms of the moments of $\om$, can be found in \cite[Theorem~2]{PR19}.
\begin{letterlemma}
\label{caract. M}
Let $\om$ be a radial weight. The following statements are equivalent:
\begin{itemize}
\item[(i)] $\om 	\in \M$;
\item[(ii)] For each $\beta>0$, there is $C=C(\beta,\om)$ such that 
$$\om_x\le C x^\beta \left( \om_{[\beta]}\right)_x,\quad x\ge 1.$$
\end{itemize}
\end{letterlemma}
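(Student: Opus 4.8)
The plan is to establish the two implications separately, working directly with the moment sequence $\{\om_x\}_{x\ge0}$ and the definition $\om\in\M$, i.e. the existence of $C_0>1$, $k>1$ with $\om_x\ge C_0\om_{kx}$ for all $x\ge1$. Throughout I will use that $\om_x=\int_0^1 s^x\om(s)\,ds$ is positive and decreasing in $x$, and it is worth noting that condition (ii) is exactly the $\M$-counterpart of the reverse estimate $x^\beta(\om_{[\beta]})_x\lesssim\om_x$, which characterizes $\DD$ in Lemma~\ref{caract. pesos doblantes}(vi).

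For (ii)$\Rightarrow$(i) it suffices to test the hypothesis against the single exponent $\beta=1$. Since $\om_{[1]}(s)=(1-s)\om(s)$ one has $(\om_{[1]})_x=\om_x-\om_{x+1}$, so (ii) reads $\om_x\le Cx(\om_x-\om_{x+1})$; after enlarging $C$ so that $C>1$ (which preserves the inequality) this rearranges to the one-step contraction $\om_{x+1}\le\bigl(1-\tfrac1{Cx}\bigr)\om_x$, valid for all $x\ge1$. Iterating along integer steps and using $\log(1-t)\le-t$ together with $\sum_{j=0}^{n-1}\frac1{x+j}\ge\log\frac{x+n}{x}$ yields $\om_{x+n}\le\bigl(\frac{x}{x+n}\bigr)^{1/C}\om_x$. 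Choosing $n=\lfloor 2x\rfloor\ge x$ (valid because $x\ge1$) and using monotonicity gives $\om_{3x}\le\om_{x+n}\le 2^{-1/C}\om_x$, which is precisely $\om\in\M$ with $k=3$. I expect this direction to be routine.

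The substance lies in (i)$\Rightarrow$(ii). Here I would bound $x^\beta(\om_{[\beta]})_x=x^\beta\int_0^1 s^x(1-s)^\beta\om(s)\,ds$ from below by discarding the part of the integral near $s=1$: on the range $s\le 1-\frac1{Kx}$ one has $x^\beta(1-s)^\beta\ge K^{-\beta}$, whence
$$x^\beta(\om_{[\beta]})_x\ge K^{-\beta}\int_0^{1-\frac1{Kx}}s^x\om(s)\,ds=K^{-\beta}\Bigl(\om_x-\int_{1-\frac1{Kx}}^1 s^x\om(s)\,ds\Bigr).$$
It then remains to show that the discarded tail is at most half of $\om_x$, uniformly in $x$, for a suitable scale $K$. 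I would estimate the tail crudely by $\int_{1-\frac1{Kx}}^1 s^x\om(s)\,ds\le\widehat\om\bigl(1-\frac1{Kx}\bigr)$ and invoke the elementary bound $\widehat\om\bigl(1-\frac1y\bigr)\lesssim\om_y$, which holds for any weight and $y\ge2$ (since $\om_y\ge(1-\frac1y)^y\,\widehat\om(1-\frac1y)$ and $(1-\frac1y)^y\ge\frac14$), to obtain $\widehat\om\bigl(1-\frac1{Kx}\bigr)\lesssim\om_{Kx}$.

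The decisive point — and the step I expect to be the main obstacle — is that a single use of the $\M$-inequality at scale $k$ need not make $\om_{Kx}$ a small fraction of $\om_x$; instead I would exploit the geometric decay obtained by iterating $\M$, namely $\om_{k^m x}\le C_0^{-m}\om_x$, and fix $K=k^m$ with $m$ chosen (independently of $\beta$) so large that the implied constant times $C_0^{-m}$ is at most $\frac12$. With this $K$ the tail is $\le\frac12\om_x$, so the displayed lower bound gives $x^\beta(\om_{[\beta]})_x\ge\frac12 K^{-\beta}\om_x$, that is, (ii) holds with $C=2K^\beta$. Since $K$ does not depend on $\beta$, this produces the required constant $C=C(\beta,\om)$ for every $\beta>0$, completing the equivalence.
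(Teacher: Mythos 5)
Your argument is correct in both directions. Note first that the paper itself contains no proof of this statement: it is quoted as a letter-lemma with a pointer to \cite[Theorem~2]{PR19}, so there is no in-paper argument to compare against; what you have produced is a valid, self-contained and elementary derivation. For (ii)$\Rightarrow$(i) you specialize to $\beta=1$, identify $(\om_{[1]})_x=\om_x-\om_{x+1}$, and convert the hypothesis into the one-step contraction $\om_{x+1}\le\bigl(1-\tfrac{1}{Cx}\bigr)\om_x$; the telescoping estimate $\om_{x+n}\le\bigl(\tfrac{x}{x+n}\bigr)^{1/C}\om_x$ and the choice $n=\lfloor 2x\rfloor$ then give $\om_x\ge 2^{1/C}\om_{3x}$, which is exactly $\om\in\M$ with $k=3$ (enlarging $C$ is harmless since $\om_x-\om_{x+1}\ge0$). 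For (i)$\Rightarrow$(ii) the splitting of the moment integral at $s=1-\tfrac{1}{Kx}$, the universal bound $\widehat{\om}\bigl(1-\tfrac1y\bigr)\le 4\om_y$ for $y\ge2$, and the iteration $\om_{k^mx}\le C_0^{-m}\om_x$ to force the discarded tail below $\tfrac12\om_x$ with $K=k^m$ independent of $\beta$ all check out, and the resulting constant $C=2K^{\beta}$ depends only on $\beta$ and $\om$ as required; the only cosmetic point is that you should take $m$ large enough that $K=k^m\ge2$, so that $Kx\ge2$ and the inequality $(1-\tfrac1{Kx})^{Kx}\ge\tfrac14$ is available, which costs nothing. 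It is also worth observing that your second implication is the exact structural dual of the paper's own proof of Lemma~\ref{caract. pesos doblantes}(i)$\Rightarrow$(vi) for the class $\DD$, which splits the moment integral at $1-\tfrac1x$ and controls the inner piece; here the class $\M$ is precisely what is needed to control the outer piece, which is a satisfying consistency check on your argument.
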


The next result is an essential tool to deal with the inequality 
$$ \|f\|_{A^p_\om}^p\lesssim \int_\D|D^{\mu}(f)(z)|^p\widehat{\mu}(z)^p\om(z)\,dA(z),\quad f\in\H(\D),$$
(see Theorem~\ref{th:inversesuf} below), so to
 prove Theorem~\ref{th:L-P-D}.

\begin{lemma}
\label{prop pesos}
Let $\om$ be a radial weight. Then, the following statements holds:

\begin{itemize}
\item[(i)] If $\om \in \Dd$ and $\varphi:[0,1)\to (0,\infty)$ is decreasing, then $\om\varphi \in \Dd$. Furthermore, if there exist $k=k(\om)>1$ and $C=C(\om)>0$ such that
\begin{equation}
\label{1}
C\int_{1-\frac{1-r}{k}}^{1}\om(s)\,ds\leq \int_r^{1-\frac{1-r}{k}}\om (s)ds , \; 0 \leq r <1
\end{equation}
then 
\begin{equation}
\label{2}
C \int_{1-\frac{1-r}{k}}^{1}\om(s) \varphi(s)\,ds\leq \int_r^{1-\frac{1-r}{k}}  \om(s)\varphi(s) \,ds, \; 0 \leq r <1.
\end{equation}
\item[(ii)] If $\om \in \DDD$, $\mu \in \DD$ and $0<p<\infty$, then $\om\mug^p \in \DDD$.
\end{itemize}
\end{lemma}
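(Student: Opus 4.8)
The plan is to settle (i) directly from the monotonicity of $\varphi$ and then to deduce (ii) by combining (i) with a two-sided comparison for the tail of $\om\mug^p$. For the implication \eqref{1}$\Rightarrow$\eqref{2} in (i), I would write $r'=1-\frac{1-r}{k}$ and use that $\varphi$ is decreasing: on $[r',1)$ one has $\varphi(s)\le\varphi(r')$ and on $[r,r']$ one has $\varphi(s)\ge\varphi(r')$. Multiplying \eqref{1} by $\varphi(r')>0$ then gives the chain
$$C\int_{r'}^1\om\varphi\le C\varphi(r')\int_{r'}^1\om\le\varphi(r')\int_r^{r'}\om\le\int_r^{r'}\om\varphi,$$
which is exactly \eqref{2}.

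To pass to the qualitative statement $\om\varphi\in\Dd$, I would first observe that \eqref{1} is just a reformulation of membership in $\Dd$: by Lemma~\ref{caract. D check}(iii), $\om\in\Dd$ yields $k>1$ and $0<C<1$ with $\int_r^{r'}\om\ge C\,\widehat\om(r)$, and splitting $\widehat\om(r)=\int_r^{r'}\om+\int_{r'}^1\om$ rewrites this as \eqref{1} with constant $\frac{C}{1-C}$ (here $C<1$ because $\int_{r'}^1\om=\widehat\om(r')>0$). The same splitting shows that \eqref{2} is precisely the condition of Lemma~\ref{caract. D check}(iii) for the weight $\om\varphi$, so \eqref{2} gives $\om\varphi\in\Dd$. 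For part (ii) the $\Dd$ half is then immediate: $\mug^p=\widehat\mu^p$ is positive and decreasing (as $p>0$ and $\widehat\mu>0$), and $\om\in\DDD\subset\Dd$, so (i) applied with $\varphi=\mug^p$ yields $\om\mug^p\in\Dd$.

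The substance of (ii) is the $\DD$ half, for which I would establish the comparison
$$\int_s^1\om(t)\widehat\mu(t)^p\,dt\asymp\widehat\om(s)\,\widehat\mu(s)^p,\qquad 0\le s<1.$$
The upper bound is free since $\widehat\mu$ is decreasing. For the lower bound I would choose $s'=1-\frac{1-s}{k}$ supplied by $\om\in\Dd$, so that $\widehat\om(s)-\widehat\om(s')\ge C\,\widehat\om(s)$; since $\frac{1-s}{1-s'}=k$, Lemma~\ref{caract. pesos doblantes}(ii) applied to $\mu\in\DD$ gives $\widehat\mu(s')\gtrsim\widehat\mu(s)$, whence
$$\int_s^1\om\,\widehat\mu^p\ge\widehat\mu(s')^p\bigl(\widehat\om(s)-\widehat\om(s')\bigr)\gtrsim\widehat\mu(s)^p\,\widehat\om(s).$$

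Finally I would read off $\om\mug^p\in\DD$ from this equivalence using $\om,\mu\in\DD$ in the forms $\widehat\om(r)\le C\widehat\om(\tfrac{1+r}{2})$ and $\widehat\mu(r)^p\le C^p\widehat\mu(\tfrac{1+r}{2})^p$:
$$\widehat{\om\mug^p}(r)\asymp\widehat\om(r)\,\widehat\mu(r)^p\lesssim\widehat\om(\tfrac{1+r}{2})\,\widehat\mu(\tfrac{1+r}{2})^p\asymp\widehat{\om\mug^p}(\tfrac{1+r}{2}),$$
which is the defining inequality of $\DD$ for $\om\mug^p$; together with the $\Dd$ membership this gives $\om\mug^p\in\DD\cap\Dd=\DDD$. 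I expect the only delicate point to be the lower bound in the displayed comparison, where the two hypotheses must be calibrated on the same scale: $\om\in\Dd$ forces $\widehat\om$ to lose a fixed proportion across $[s,s']$, while $\mu\in\DD$ guarantees that $\widehat\mu$ does not collapse over that same interval, and it is the compatibility of these two effects that makes the product weight behave well.
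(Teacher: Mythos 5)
Your proof is correct and follows essentially the same route as the paper: part (i) is the identical monotonicity argument, and part (ii) uses the same three ingredients ($\om\in\DD$, $\om\in\Dd$ via Lemma~\ref{caract. D check}(iii), and the doubling of $\widehat{\mu}$ over a scale-$k$ interval from Lemma~\ref{caract. pesos doblantes}(ii)). The only cosmetic difference is that you package the $\DD$ half through the explicit two-sided comparison $\widehat{\om\mug^p}(s)\asymp\widehat{\om}(s)\,\widehat{\mu}(s)^p$, whereas the paper chains the same estimates directly into $\widehat{\om\mug^p}(r)\lesssim\widehat{\om\mug^p}\left(\frac{1+r}{2}\right)$.
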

\begin{proof}

By Lemma~\ref{caract. D check}~(iii), 
the proof of (i) follows from the proof of \eqref{1}$\Rightarrow$\eqref{2}. Indeed,
since $\varphi$ is decreasing
\begin{align*}
\int_r^{1-\frac{1-r}{k}} \om(s)\varphi(s) \,ds &\geq \varphi\left(1-\frac{1-r}{k} \right) \int_r^{1-\frac{1-r}{k}} \om(s) ds  
\\& \geq  C \varphi\left(1-\frac{1-r}{k} \right)\int_{1-\frac{1-r}{k}}^{1}\om(s)\,ds 
\\ & \geq C \int_{1-\frac{1-r}{k}}^{1}\om(s) \varphi(s)\,ds,
\end{align*}
and \eqref{2} holds.

Throughout the rest of the proof let us denote $\nu=\om\mug^p.$
Since $\om \in \DDD$, by Lemma~\ref{caract. D check}~(iii)  there exists $k=k(\om)>1$ such that
\begin{equation*}
\nug(r)\leq \mug(r)^p\omg(r)\lesssim \omg\left(\frac{1+r}{2}\right)\mug(r)^p\lesssim  \mug(r)^p\int_{\frac{1+r}{2}}^{1-\frac{1-r}{2k}} \om(s)ds.
\end{equation*}
Moreover, by Lemma~\ref{caract. pesos doblantes}~(ii)  there exist $C=C(\mu)>0$ and $\a=\a(\mu)>0$ such that $\mug(r)\leq 2^{\a}k^{\a}C \mug\left(1-\frac{1-r}{2k}\right)$, so
$$\nug(r)\lesssim \int_{\frac{1+r}{2}}^{1-\frac{1-r}{2k}} \om(s)\mug(s)^p ds\le
 \nug\left(\frac{1+r}{2}\right),$$
that is $\nu \in \DD$. Moreover, $\nu\in \Dd$ by (i). This finishes the proof.
\end{proof}

\subsection{Universal Ces\'aro basis of polynomials}

In this section, 
we  establish some notation and previous results on universal Ces\'aro basis of polynomials,
which will be strongly used in the proofs of  Theorem~\ref{th:L-P-D} and Theorem~\ref{theorem:L-P-D-hat}.

The Hadamard product of a polynomial  $W(z)=\sum\limits_{k \in J}b_k z^k$,  where $J$ denotes a finite subset of $\N$ 
and $f(z)=\sum_{k=0}^\infty a_k z^k\in\H(\D)$ is 
$$(W\ast f)(z)=\sum_{k=0}^\infty a_k b_kz^k,\quad z\in\D.$$ 
 Furthermore, it is easy to observe that
    \begin{equation*}
    (W\ast f)(e^{it})
    =\frac{1}{2\pi}\int_{-\pi}^\pi W(e^{i(t-\theta)})f(e^{i\t})\,d\t.
    \end{equation*}
For a given $C^\infty$-function $\Phi:\mathbb{R}\to\C$ with compact support,
set
    $$
    A_{\Phi,m}=\max_{x\in\mathbb{R}}|\Phi(x)|+m\max_{x\in\mathbb{R}}|\Phi^{(m)}(x)|,\quad m\in\N\cup\{0\},
    $$
and define the polynomials
    \begin{equation*}
    W_n^\Phi(z)=\sum_{k\in\mathbb
    Z}\Phi\left(\frac{k}{n}\right)z^{k},\quad n\in\N.
    \end{equation*}

The next result can be  found in \cite[pp.~111--113]{Pabook}.

\begin{lettertheorem}
\label{Th Polinomios Cesaro}
Let $\Phi:\RR\to\C$ be a compactly supported $C^{\infty}$-function. Then the following statements hold:
\begin{itemize}
\item[(i)] There exixts a constant $C>0$ such that
$$ \vert W_n^{\Phi}(e^{i\t})\vert\leq C \min \left\{n \max\limits_{s \in \RR}\vert \Phi (s)\vert, n^{1-m}\vert \t\vert^{-m}\max\limits_{s \in \RR}\vert \Phi^{(m)} (s)\vert\right\}$$
for all $m \in \N\cup\{0\}$, $n \in \N$ and $0<\vert \t\vert<\pi$.
\item[(ii)] If $0<p\leq 1$ and $m \in \N$ with $mp>1$, there exists a constant $C=C(p)>0$ such that
$$ \left(\sup\limits_{n}\vert (W_n^{\Phi}* f)(e^{i\t})\vert\right)^p\leq C A_{\Phi, m}^p M (\vert f \vert^p)(e^{i\t})$$
for all $f \in H^p$, where $M$ denotes the Hardy-Littlewood maximal-operator
$$ M(f)(e^{i\t})=\sup\limits_{0<h<\pi}\frac{1}{2h}\int_{\t-h}^{\t+h}\vert f (e^{it})\vert dt.$$
\item[(iii)] For each $0<p<\infty$ and $m \in \N$ with $mp>1$, there exists a constant $C=C(p)>0$ such that 
$$\Vert W_n^{\Phi}*f\Vert_{H^p}\leq C A_{\Phi,m}\Vert f \Vert_{H^p}$$
for all $f \in H^p$ and $n \in \N$.
\end{itemize}
\end{lettertheorem}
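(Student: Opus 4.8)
The plan is to extract pointwise bounds on the kernel $W_n^{\Phi}$ first, and then feed them into (ii) a maximal-function argument and (iii) a convolution argument; parts (ii) and (iii) are essentially mechanical consequences of (i) in the easy regimes, and the one genuinely delicate point is (iii) for $0<p\le1$.

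\textbf{Part (i).} Since $\Phi$ has compact support, $W_n^{\Phi}(e^{i\theta})=\sum_{k\in\mathbb Z}\Phi(k/n)e^{ik\theta}$ is a finite trigonometric sum with only $O(n)$ nonzero terms (the number of integers $k$ with $k/n\in\supp\Phi$ is $\lesssim n$). The first bound is then immediate: $|W_n^{\Phi}(e^{i\theta})|\le\sum_k|\Phi(k/n)|\le Cn\max_{s}|\Phi(s)|$. For the decaying bound I would iterate summation by parts. Writing $z=e^{i\theta}$ and $a_k=\Phi(k/n)$, the Abel identity gives $(z-1)\sum_k a_kz^k=-\sum_k(\Delta a_{k-1})z^k$ with $\Delta a_{k-1}=a_k-a_{k-1}$, and iterating $m$ times, $(z-1)^m\,W_n^{\Phi}(e^{i\theta})=(-1)^m\sum_k(\Delta^m a_{k-m})z^k$, where the boundary terms vanish because $(a_k)$ is finitely supported. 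By Taylor's theorem the $m$-th finite difference satisfies $|\Delta^m a_k|\le n^{-m}\max_s|\Phi^{(m)}(s)|$, and again only $O(n)$ of them are nonzero, so $|z-1|^m|W_n^{\Phi}(e^{i\theta})|\le Cn^{1-m}\max_s|\Phi^{(m)}(s)|$. Finally $|z-1|=2|\sin(\theta/2)|\ge\frac2\pi|\theta|$ on $(-\pi,\pi)$, which yields the second bound; taking the minimum of the two gives (i).

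\textbf{Part (ii).} From (i), after absorbing $\max|\Phi|$ and $\max|\Phi^{(m)}|$ into $A_{\Phi,m}$, the kernel is dominated by the even, radially decreasing envelope $\Psi_n(\phi)=\min\{n,\,n^{1-m}|\phi|^{-m}\}$, i.e. $|W_n^{\Phi}(e^{i\phi})|\le CA_{\Phi,m}\Psi_n(\phi)$. Since $0<p\le1$ together with $mp>1$ forces $m\ge2$, a direct computation gives $\|\Psi_n\|_{L^1}\le C$ uniformly in $n$ (the mass near the origin is $\asymp1$ and the tail integral converges precisely because $m\ge2$). I would then estimate $|(W_n^{\Phi}\ast f)(e^{i\theta})|\le CA_{\Phi,m}(\Psi_n\ast|f|)(e^{i\theta})$ and apply Jensen's inequality for the concave map $t\mapsto t^p$ against the probability measure $\Psi_n\,d\phi/\|\Psi_n\|_{L^1}$ to move the power inside, obtaining $|(W_n^{\Phi}\ast f)(e^{i\theta})|^p\le CA_{\Phi,m}^p(\Psi_n\ast|f|^p)(e^{i\theta})$. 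The standard bound for convolution against a radially decreasing $L^1$ kernel, $\Psi_n\ast|f|^p\le\|\Psi_n\|_{L^1}\,M(|f|^p)$, then closes the estimate uniformly in $n$, so the supremum over $n$ obeys the same bound.

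\textbf{Part (iii).} For $1\le p<\infty$ this is quick: the bound $\|W_n^{\Phi}\|_{L^1}\le CA_{\Phi,m}$ (again the $\Psi_n$ estimate from (i)), Young's convolution inequality on $\partial\D$, and $\|f\|_{H^p}=\|f\|_{L^p(\partial\D)}$ give the claim. The delicate range, and the main obstacle, is $0<p\le1$: here one cannot simply integrate the pointwise estimate (ii), since the Hardy--Littlewood maximal operator is unbounded on $L^1$, so $\int_{\partial\D}M(|f|^p)$ need not be controlled by $\|f\|_{H^p}^p$ (this already fails for suitable analytic $f$, e.g. $|f|^p$ a Poisson kernel). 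The resolution must exploit the analytic cancellation encoded in the decaying kernel bound of (i). Concretely, I would run the atomic decomposition of $H^p$: using both kernel estimates from (i), one checks that $W_n^{\Phi}\ast a$ is, uniformly in $n$, a molecule of bounded $H^p$-quasinorm for every $p$-atom $a$, with the decay $n^{1-m}|\theta|^{-m}$ supplying the smoothness and moment control, and then sums over the decomposition. Equivalently, $f\mapsto W_n^{\Phi}\ast f$ is a smooth, frequency-compactly-supported Fourier multiplier whose multiplier norm is governed by $A_{\Phi,m}$, so a H\"ormander-type multiplier theorem on $H^p(\partial\D)$ applies. I expect this molecular/multiplier estimate for $p\le1$ to be the technical heart of the argument, with parts (i)--(ii) serving as the mechanism that feeds it.
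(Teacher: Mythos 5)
The paper offers no proof of this statement: it is Theorem A, quoted verbatim from \cite[pp.~111--113]{Pabook}, so your attempt must be measured against the standard proof there. Your part (i) is correct and is essentially that proof: iterated Abel summation, the mean-value bound $|\Delta^m\Phi(k/n)|\le n^{-m}\max_s|\Phi^{(m)}(s)|$, the count of $O(n)$ nonzero coefficients, and $|e^{i\theta}-1|=2|\sin(\theta/2)|\ge \frac{2}{\pi}|\theta|$.

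Part (ii), however, breaks at its central step. For $0<p\le 1$ the map $t\mapsto t^p$ is \emph{concave}, so Jensen's inequality against the probability measure $\Psi_n\,d\phi/\|\Psi_n\|_{L^1}$ gives $\bigl(\int |f|\,d\nu\bigr)^p\ge \int |f|^p\,d\nu$ --- the opposite of what you need. The inequality $(\Psi_n*|f|)^p\le C\,\Psi_n*(|f|^p)$ is in fact false for general measurable $f$: taking $|f|=\varepsilon^{-1}\chi_{[0,\varepsilon]}$ against a fixed probability density makes the left side $\asymp 1$ while the right side is $\asymp \varepsilon^{1-p}\to 0$. This is exactly where the analyticity of $f$ must enter, and it is the technical heart of the cited proof: one uses the Hardy--Littlewood pointwise estimate for $H^p$ functions (equivalently, that $W_n^{\Phi}*f$ is an analytic polynomial of degree $O(n)$, so that $|g(e^{i\theta})|^p\lesssim n\int |g(e^{it})|^p(1+n|\theta-t|)^{-2}\,dt$) to pass from a kernel average of $|f|$ to a kernel average of $|f|^p$ \emph{before} invoking the maximal operator. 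Without such an input, (ii) does not follow from (i). Two further remarks on (iii): your Young's-inequality argument for $p\ge 1$ requires $\|W_n^{\Phi}\|_{L^1}\le C A_{\Phi,m}$, which fails for $m=1$ (the majorant $\min\{n,|\theta|^{-1}\}$ has $L^1$-norm $\asymp\log n$), even though $m=1$ is admitted when $p>1$; there one should instead use the uniformly bounded variation of the multiplier sequence $\{\Phi(k/n)\}_k$ and the Marcinkiewicz multiplier theorem. And for $0<p\le 1$ the standard route is far simpler than atoms or molecules: since $mp>1$ is strict, choose $q$ with $1/m<q<p$, apply (ii) with exponent $q$, and integrate using the boundedness of $M$ on $L^{p/q}$ with $p/q>1$. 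As written, your (iii) for $p\le 1$ rests on an unproved multiplier/molecular claim and, upstream, on the invalid Jensen step in (ii).
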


The property (iii) shows that the polynomials $\{W_n^\Phi\}_{n\in\N}$ can be seen as a universal C\'esaro basis for $H^p$ for any $0<p<\infty$. In the statement of the next result, we consider a 
 particular family of polynomials $\{W_n^\Phi\}_{n\in\N}$  which
 play a key role in this manuscript.

\begin{proposition}\label{pr:cesaro}
Let $k \in \N$, $k>1$ and $\Psi:\mathbb{R}\to\mathbb{R}$ be a $C^\infty$-function such that $\Psi\equiv 1$ on $(-\infty,1]$, $\Psi\equiv 0$ on $[k,\infty)$ and $\Psi$ is decreasing and positive on $(1,k)$. Set $\psi(t)=\Psi\left(\frac{t}{k}\right)-\Psi(t)$ for all $t\in\mathbb{R}$. Let $V_{0,k}(z)=\sum\limits_{j=0}^{k-1} \Psi(j) z^j$
and
    \begin{equation*}\label{vnk}
    V_{n,k}(z)=W^\psi_{k^{n-1}}(z)=\sum_{j=0}^\infty
    \psi\left(\frac{j}{k^{n-1}}\right)z^j=\sum_{j=k^{n-1}}^{k^{n+1}-1}
    \psi\left(\frac{j}{k^{n-1}}\right)z^j,\quad n\in\N.
    \end{equation*}
 
Then,
    \begin{equation}
    \label{propervn1}
    f(z)=\sum_{n=0}^\infty (V_{n,k}\ast f)(z),\quad z\in\D,\quad f\in\H(\D),
    \end{equation}
    and for each $0<p<\infty$ there exists a constant $C=C(p,\Psi,k)>0$ such that
    \begin{equation}
    \label{propervn2}
   \|V_{n,k}\ast f\|_{H^p}\le C\|f\|_{H^p},\quad f\in H^p, \quad n \in \N.
    \end{equation}
\end{proposition}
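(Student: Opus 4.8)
The plan is to treat the two assertions separately: \eqref{propervn1} is a telescoping identity at the level of Taylor coefficients, while \eqref{propervn2} is an essentially immediate consequence of Theorem~\ref{Th Polinomios Cesaro}(iii) once we verify that the underlying multiplier $\psi$ is a compactly supported $C^\infty$ function.

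For \eqref{propervn1}, I would first record the coefficient of $z^j$ in each $V_{n,k}$. For $n=0$ it is $\Psi(j)$ (the terms with $j\ge k$ are absent since $\Psi\equiv 0$ on $[k,\infty)$), and for $n\ge 1$ it is $\psi(j/k^{n-1})=\Psi(j/k^{n})-\Psi(j/k^{n-1})$. Hence the coefficient of $z^j$ in the partial sum $S_Nf=\sum_{n=0}^N V_{n,k}\ast f$ equals $\widehat f(j)$ times
$$\Psi(j)+\sum_{n=1}^N\big(\Psi(j/k^{n})-\Psi(j/k^{n-1})\big)=\Psi(j/k^{N}),$$
the inner sum telescoping. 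Since $\Psi\equiv 1$ on $(-\infty,1]$, for each fixed $j$ we have $\Psi(j/k^{N})=1$ as soon as $k^{N}>j$, so these coefficients stabilise at $\widehat f(j)$. To upgrade coefficientwise convergence to the stated identity I would estimate, for $|z|\le r<1$,
$$|f(z)-S_Nf(z)|=\Big|\sum_{j\ge k^{N}}\widehat f(j)\big(1-\Psi(j/k^{N})\big)z^j\Big|\le\big(1+\|\Psi\|_{\infty}\big)\sum_{j\ge k^{N}}|\widehat f(j)|\,r^j,$$
which is the tail of a convergent power series and thus tends to $0$ as $N\to\infty$. This yields \eqref{propervn1} with uniform convergence on compact subsets of $\D$.

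For \eqref{propervn2} with $n\in\N$, the key observation is that $\psi(t)=\Psi(t/k)-\Psi(t)$ is a compactly supported $C^\infty$ function: on $(-\infty,1]$ both terms equal $1$ and on $[k^2,\infty)$ both vanish, so $\supp\psi\subset[1,k^2]$. Since $V_{n,k}=W^\psi_{k^{n-1}}$, I would fix $m\in\N$ with $mp>1$ and apply Theorem~\ref{Th Polinomios Cesaro}(iii) to $\Phi=\psi$, obtaining
$$\|V_{n,k}\ast f\|_{H^p}=\|W^\psi_{k^{n-1}}\ast f\|_{H^p}\le C\,A_{\psi,m}\|f\|_{H^p},\quad n\in\N,$$
where $C=C(p)$ and $A_{\psi,m}$ depend only on $p$, $\Psi$ and $k$; this is exactly \eqref{propervn2}. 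If one additionally wants the analogous bound for $n=0$ (which is what makes the decomposition \eqref{propervn1} quantitatively useful in the norm equivalences), I would argue directly: $V_{0,k}\ast f=\sum_{j=0}^{k-1}\Psi(j)\widehat f(j)z^j$ is a polynomial of degree $\le k-1$, so the subadditivity of $\|\cdot\|_{H^p}^p$ for $0<p\le 1$ (and the triangle inequality for $p\ge 1$) together with the standard coefficient estimate $|\widehat f(j)|\le C_{p,j}\|f\|_{H^p}$ yields $\|V_{0,k}\ast f\|_{H^p}\le C\|f\|_{H^p}$.

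The only genuinely delicate point is the applicability of Theorem~\ref{Th Polinomios Cesaro}: it demands a compactly supported smooth multiplier, and $\Psi$ itself is \emph{not} compactly supported. This is precisely why the construction passes to the differences $\psi=\Psi(\cdot/k)-\Psi$, for which the constant-$1$ region and the zero region cancel; verifying $\supp\psi\subset[1,k^2]$ is therefore the crux, after which both parts of the proposition follow mechanically. The term $n=0$ is the single case not covered by Theorem~\ref{Th Polinomios Cesaro}, but it is harmless since $V_{0,k}$ is one fixed polynomial and the coefficient functionals are bounded on $H^p$.
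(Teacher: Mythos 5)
Your proof is correct and follows essentially the same route as the paper's: a telescoping identity for the Taylor coefficients ($\sum_{n=0}^{N}\widehat{V_{n,k}}(j)=\Psi(j/k^{N})\to 1$ for each fixed $j$) combined with a tail estimate for the convergent power series gives \eqref{propervn1}, and \eqref{propervn2} is obtained by applying Theorem~\ref{Th Polinomios Cesaro}(iii) to the compactly supported $C^\infty$ multiplier $\psi$. Your version is slightly more streamlined (a direct computation of the coefficients of the partial sums rather than first verifying the identity on polynomials and then approximating), and your explicit check that $\supp\psi\subset[1,k^{2}]$, together with the separate elementary treatment of the single polynomial $V_{0,k}$, makes explicit two points the paper leaves implicit.
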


If $k=2$ we simply denote $V_{n,2}=V_n$. A
proof of Proposition~\ref{pr:cesaro} for this choice appears in
 \cite[p. 175--177]{JevPac98} or \cite[p. 143--144]{Pabook2}. 
 For the convenience of the reader and the sake of completeness, we present a proof of Proposition~\ref{pr:cesaro}  
   following the ideas in 
\cite{JevPac98,Pabook2}.

\begin{Prf}{\em{Proposition~\ref{pr:cesaro}}.}     
Let us denote by $\{\widehat{V_{n,k}}(j)\}_j$ the sequence of Taylor coefficients of $V_{n,k}$.
Since $\sum\limits_{j=0}^{\infty} |\widehat{f}(j)||z|^j$ converges for each $z \in \D$, 
$\supp \widehat{V_{n,k}}\subset \N\bigcap [k^{n-1},k^{n+1})$ and  $\vert \widehat{V_{n,k}}(j)\vert \leq 2$ for all $n \in \N$ and $j \in \N$, 
\begin{align*}
\left| \sum_{n=1}^\infty (V_{n,k}\ast f)(z) \right| \le 2 \sum_{n=1}^\infty \sum\limits_{j=k^{n-1}}^{k^{n+1}} |\widehat{f}(j)||z|^j \le 4\sum_{j=0}^\infty |\widehat{f}(j)||z|^j,
\end{align*}
that is,  $\sum_{n=0}^\infty (V_{n,k}\ast f)(z)$ converges for each $z \in \D$.
 Let us prove that $ \sum\limits_{n=0}^{\infty}\widehat{V_{n,k}}(j)=1$, for each $j=0,1,2,\dots .$
 
If $0\leq j \leq k-1$, $\sum\limits_{n=0}^{\infty}\widehat{V_{n,k}}(j)= \widehat{V_{0,k}}(j)+ \widehat{V_{1,k}}(j)=
\Psi\left(\frac{j}{k}\right)=1$. 
On the other hand,
 if $j \geq k$ then $\Psi(j)=0$ and $\sum\limits_{n=0}^{\infty}\widehat{V_{n,k}}(j)= \lim \limits_{m\to \infty} \sum\limits_{n=1}^{m}\psi\left(\frac{j}{k^{n-1}}\right)=\lim\limits_{m\to \infty} \Psi\left(\frac{j}{k^{m}}\right)=1$.

Therefore, it is clear that \eqref{propervn1} holds for polynomials. Let us show that \eqref{propervn1} holds for each $f \in \H(\D)$. 
Let be $S_nf(z)=\sum_{l=0}^n\widehat{f}(l)z^l$, the $n$-th partial sum of $f$.
Fixed $z \in \D$, 
\begin{align*}
\left|f(z)-\sum_{n=0}^\infty (V_{n,k}\ast f)(z)\right|
&\leq |f(z)-S_{k^{m}} f (z)|+\left|S_{k^{m}} f(z)- \sum_{n=0}^\infty (V_{n,k}\ast f)(z)\right|
\\ & =I(f,m,z)+II(f,m,z),
\end{align*}
where $I(f,m,z)=|f(z)-S_{k^{m}} f (z)|$ and $II(f,m,z)=\left|S_{k^{m}} f(z)- \sum_{n=0}^\infty (V_{n,k}\ast f)(z)\right|$.
We have that $\lim_{m\to \infty}I(f,m,z)=0$, and
 using
\eqref{propervn1}  for $S_{k^{m}}f$, 
\begin{equation*}\begin{split}
II(f,m,z)&= 
 \left| \sum_{n=0}^\infty(V_{n,k}\ast S_{k^{m}} f)(z)- \sum_{n=0}^\infty (V_{n,k}\ast f)(z)\right|\\
 &= \left| \sum_{n=0}^{m+1}(V_{n,k}\ast f)(z)- \sum_{n=0}^\infty (V_{n,k}\ast f)(z)\right|
\\ & \lesssim \sum_{j=k^{m+1}}^\infty |\widehat{f}(j)||z|^j,
\end{split}\end{equation*}
so $\lim_{m\to \infty}II(f,m,z)=0$. Consequently  \eqref{propervn1} holds  for  any $f \in \H(\D)$.

Finally, \eqref{propervn2} follows from Theorem \ref{Th Polinomios Cesaro}~(iii).
\end{Prf}
 
\section{Proof of Theorem~\ref{theorem:L-P-D-hat}.}

To begin with, we will prove some technical lemmas. The first one ensures that
the definition of $D^\mu$ makes sense when $\mu\in\DD$.

\begin{lemma}\label{le:welldef}
Let $\mu\in\DD$ and $f\in\H(\D)$. Then, the fractional derivative $D^\mu f\in \H(\D)$ .
\end{lemma}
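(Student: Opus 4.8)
The plan is to show that passing from $f$ to $D^\mu f$ amplifies the Taylor coefficients by at most a polynomial factor, so that the radius of convergence is preserved. Recall that $f\in\H(\D)$ is equivalent to $\limsup_{n\to\infty}|\widehat f(n)|^{1/n}\le 1$, and that $D^\mu f=\sum_{n}(\widehat f(n)/\mu_{2n+1})z^n$; hence it suffices to bound $1/\mu_{2n+1}$ from above by a quantity that grows subexponentially in $n$.

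The key step is a polynomial lower bound on the odd moments of $\mu$. Since $\mu\in\DD$, Lemma~\ref{caract. pesos doblantes}~(v) provides constants $C=C(\mu)>0$ and $\eta=\eta(\mu)>0$ with $\mu_x\le C(y/x)^{\eta}\mu_y$ for $0<x\le y<\infty$. Taking $x=1$ and $y=2n+1\ge 1$ yields
\[
\mu_{2n+1}\ge \frac{\mu_1}{C}(2n+1)^{-\eta},\qquad n\in\N\cup\{0\}.
\]
Here $\mu_1=\int_0^1 r\mu(r)\,dr>0$, because the standing assumption $\widehat\mu(z)>0$ on $\D$ prevents $\mu$ from vanishing almost everywhere. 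Consequently $1/\mu_{2n+1}\lesssim (2n+1)^{\eta}$, that is, the reciprocal moments grow at most polynomially.

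With this bound the conclusion is immediate. Fixing $z\in\D$ and choosing $\rho$ with $|z|<\rho<1$, the membership $f\in\H(\D)$ gives $|\widehat f(n)|\le C_\rho\,\rho^{-n}$, and therefore
\[
\sum_{n=0}^\infty \frac{|\widehat f(n)|}{\mu_{2n+1}}|z|^n
\lesssim \sum_{n=0}^\infty (2n+1)^{\eta}|\widehat f(n)|\,|z|^n
\lesssim \sum_{n=0}^\infty (2n+1)^{\eta}\left(\frac{|z|}{\rho}\right)^n<\infty,
\]
the last series converging since $|z|/\rho<1$. This shows that the defining series for $D^\mu f$ converges absolutely and uniformly on compact subsets of $\D$, whence $D^\mu f\in\H(\D)$.

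I do not expect a serious obstacle here; the entire content is the recognition that the doubling hypothesis $\mu\in\DD$ is precisely what rules out exponential decay of the moments $\mu_{2n+1}$ (which could occur for a general radial weight, making $1/\mu_{2n+1}$ blow up geometrically and destroy analyticity). Once the polynomial control furnished by Lemma~\ref{caract. pesos doblantes}~(v) is in hand, the remainder is a routine comparison of power series.
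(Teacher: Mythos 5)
Your proof is correct and follows essentially the same route as the paper: both arguments reduce the lemma to a polynomial lower bound $\mu_{2n+1}\gtrsim (n+1)^{-\eta}$ obtained from the characterizations of $\DD$ in Lemma~\ref{caract. pesos doblantes} (you invoke item (v), the paper uses item (ii) via $\widehat\mu$), and then conclude that the radius of convergence is unchanged. The only cosmetic difference is that you verify absolute convergence on compacta while the paper phrases the same estimate through the root test.
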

\begin{proof}
By Lemma~\ref{caract. pesos doblantes}~(ii), 
  there exist $C=C(\mu)>0$ and $\a=\a(\mu)>0$ such that 
$$\mu_{2k+1}\geq C \mug\left(1-\frac{1}{2(k+1)}\right)\geq \frac{C\mug(0)}{ 2^{\a}}\frac{1}{(k+1)^{\a}}=\frac{C}{(k+1)^{\a}}, \; k \in \N\cup\{0\}.$$
So, for each $k \in \N\cup\{0\}$,
$\sqrt[k]{\frac{\vert \widehat{f}(k)\vert}{\mu_{2k+1}}}\leq \sqrt[k]{\frac{(k+1)^{\a}}{C}}\sqrt[k]{\vert \widehat{f}(k)\vert}.$
 Then,
   it follows that
$\limsup\limits_{k\to\infty} \sqrt[k]{\frac{\vert \widehat{f}(k)\vert}{\mu_{2k+1}}}\leq 1$, and therefore $D^{\mu}(f)\in \H(\D).$
\end{proof}

\begin{lemma}
\label{suma}
Let $\mu \in \DD$,  $\gamma>0$ and $k \in \N\setminus\{1\}$. Then, 
\begin{equation}\label{eq:suma1}
 \sum\limits_{n=0}^{\infty}\frac{r^{k^{n}}}{\mu_{k^{n}}^{\gamma}}\asymp \int_0^r \frac{dt}{(1-t)\mug(t)^\gamma},\quad 0\le r<1.
 \end{equation}
Moreover, if $\mu\in\DDD$ 
\begin{equation}\label{eq:suma2}
 1+\sum\limits_{n=0}^{\infty}\frac{r^{k^{n}}}{\mu_{k^{n}}^{\gamma}}\asymp \frac{1}{\mug(r)^{\gamma}},\quad 0\le r<1.
 \end{equation}
\end{lemma}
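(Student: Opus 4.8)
The plan is to discretize along the $k$-adic grid $r_n=1-k^{-n}$ ($n\ge0$), the points carrying the moments $\mu_{k^n}$ in the sense of Lemma~\ref{caract. pesos doblantes}~(iii), $\mu_{k^n}\asymp\mug(r_n)$. Write $I_n=[r_n,r_{n+1})$ and $a_n=\mu_{k^n}^{-\gamma}$. I will rely on three elementary facts. First, $\int_{I_n}\frac{dt}{1-t}=\log\frac{1-r_n}{1-r_{n+1}}=\log k$. Second, since $\mug$ is nonincreasing and $\mu\in\DD$, Lemma~\ref{caract. pesos doblantes}~(ii) with $s=r_n\le t=r_{n+1}$ gives $\mug(r_n)\le Ck^{\a}\mug(r_{n+1})$, which combined with monotonicity yields $\mug(t)\asymp\mug(r_n)\asymp\mu_{k^n}$ for $t\in I_n$ and the comparability of consecutive moments $a_n\asymp a_{n+1}$; hence $\int_{I_n}\frac{dt}{(1-t)\mug(t)^\gamma}\asymp a_n$. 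Third, Lemma~\ref{caract. pesos doblantes}~(v) forces $a_n$ to grow at most geometrically, $a_{n+m}\le (C')^m a_n$.

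To prove \eqref{eq:suma1} I fix $r\in[0,1)$, set $N=\lfloor\log_k\frac1{1-r}\rfloor$ (so $r_N\le r<r_{N+1}$), and show both sides are comparable to $\sum_{n=0}^N a_n$. On the integral side, summing the block estimate gives $\int_0^{r_N}\asymp\sum_{n=0}^{N-1}a_n$, the leftover satisfies $\int_{r_N}^r\le\int_{I_N}\asymp a_N$, and since consecutive $a_n$ are comparable these combine to $\int_0^r\asymp\sum_{n=0}^N a_n$. On the series side, for $0\le n\le N$ one has $1\ge r^{k^n}\ge(1-k^{-N})^{k^N}\ge c>0$, so $\sum_{n=0}^N r^{k^n}a_n\asymp\sum_{n=0}^N a_n$; for the tail I bound $r^{k^n}\le e^{-k^n(1-r)}\le e^{-k^{\,n-N-1}}$ (using $1-r>k^{-(N+1)}$) together with $a_n\le (C')^{\,n-N}a_N$, whence $\sum_{n>N}r^{k^n}a_n\lesssim a_N\sum_{m\ge1}(C')^m e^{-k^{m-1}}\lesssim a_N$, again absorbed into $\sum_{n=0}^N a_n$. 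The degenerate range $N=0$ (i.e.\ $r<1-\frac1k$) is treated directly, where both sides are $\asymp\mu_1^{-\gamma}r$ using $\log\frac1{1-r}\asymp r$.

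Given \eqref{eq:suma1}, the moreover statement \eqref{eq:suma2} reduces, via $A\asymp B\Rightarrow 1+A\asymp 1+B$, to proving $1+\int_0^r\frac{dt}{(1-t)\mug(t)^\gamma}\asymp\mug(r)^{-\gamma}$, and here the two halves of $\DDD$ play complementary roles. For the upper bound I use $\mu\in\Dd$: Lemma~\ref{caract. D check}~(ii) gives $\mug(r)\le C\big(\tfrac{1-r}{1-t}\big)^{\b}\mug(t)$ for $t\le r$, so $\mug(t)^{-\gamma}\le C^\gamma(1-r)^{\b\gamma}(1-t)^{-\b\gamma}\mug(r)^{-\gamma}$; integrating and using $\int_0^r(1-t)^{-1-\b\gamma}\,dt\le\frac1{\b\gamma}(1-r)^{-\b\gamma}$ yields $\int_0^r\frac{dt}{(1-t)\mug(t)^\gamma}\lesssim\mug(r)^{-\gamma}$, while $1\le\mu_0^\gamma\mug(r)^{-\gamma}$ handles the added constant. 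For the lower bound I use $\mu\in\DD$: for $r\ge\frac12$ put $\rho=2r-1$ so that $\frac{1+\rho}2=r$; then $\mug(t)\le\mug(\rho)\le C\mug(r)$ on $[\rho,r]$ by the doubling property and $\int_\rho^r\frac{dt}{1-t}=\log2$ give $\int_0^r\frac{dt}{(1-t)\mug(t)^\gamma}\gtrsim\mug(r)^{-\gamma}$, whereas for $r<\frac12$ the constant $1$ already dominates $\mug(r)^{-\gamma}\le\mug(\tfrac12)^{-\gamma}$.

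I expect the main obstacle to be the tail estimate of the series in \eqref{eq:suma1}: it requires exploiting simultaneously the super-geometric decay of $r^{k^n}$ for $n>N$ and the merely geometric growth of the weights $a_n=\mu_{k^n}^{-\gamma}$, so that Lemma~\ref{caract. pesos doblantes}~(v) is used in an essential way; without such an a priori growth bound on the moments the exchange between the series and the integral would break down. A secondary, purely bookkeeping difficulty is keeping the comparability constants uniform across the boundary and small-$r$ cases.
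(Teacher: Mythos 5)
Your proof is correct, and for \eqref{eq:suma2} it coincides with the paper's argument (upper bound from Lemma~\ref{caract. D check}~(ii) applied to $\mu\in\Dd$, lower bound by integrating over $[2r-1,r]$ and doubling, with the trivial case $r\le\frac12$ separated). For \eqref{eq:suma1}, however, you take a genuinely different route. The paper first shows, by blocking indices $k$-adically and using the comparability of moments within a block, that the lacunary sum is comparable to the full power series $\sum_{j\ge1}\frac{r^j}{(j+1)\mu_{2j+1}^{\gamma}}$, and then invokes the known equivalence \cite[(2.9)]{PelRatAdv2016} between that series and the integral $\int_0^r\frac{dt}{(1-t)\mug(t)^{\gamma}}$. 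You instead localize both sides directly at the scale $N=\lfloor\log_k\frac1{1-r}\rfloor$ and show each is comparable to the partial sum $\sum_{n=0}^{N}\mu_{k^n}^{-\gamma}$, using Lemma~\ref{caract. pesos doblantes}~(iii) to identify $\mu_{k^n}$ with $\mug(1-k^{-n})$ for the integral blocks and Lemma~\ref{caract. pesos doblantes}~(v) to control the tail of the series against the super-geometric decay of $r^{k^n}$. Your version is self-contained (it avoids the external citation and the intermediate series) and makes transparent exactly where the $\DD$-hypothesis enters; the paper's version is shorter on the page because it outsources the series-to-integral step. One minor bookkeeping remark: in your degenerate range $r<1-\frac1k$ the claim that both sides are $\asymp\mu_1^{-\gamma}r$ still needs the factorization $r^{k^n}=r\cdot r^{k^n-1}$ together with the same geometric-versus-double-exponential tail bound to keep the series tail dominated by $r\mu_1^{-\gamma}$ as $r\to0$; this is implicit in what you wrote but worth spelling out.
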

\begin{proof}
Since $\mu\in\DD$,
\begin{equation*}\begin{split}
\sum\limits_{n=0}^{\infty}\frac{r^{k^{n}}}{\mu_{k^{n}}^{\gamma}} &\asymp  
\frac{r}{\mu_1^\gamma}+\sum\limits_{n=1}^{\infty}\frac{r^{k^{n}}}{\mu_{k^{n}}^{\gamma}}\frac{1}{k^n}\sum_{j=k^{n-1}}^{k^n-1}1
\\ & \lesssim \frac{r}{\mu_1^\gamma}+
\sum\limits_{n=1}^{\infty}\sum_{j=k^{n-1}}^{k^n-1}
\frac{r^j}{(j+1)\mu^\g_{j}}
\\ &
\asymp \sum_{j=1}^\infty \frac{r^j}{(j+1)\mu^\g_{2j+1}}, \quad 0\le r<1.
\end{split}\end{equation*}
Analogously, it can be proved that
$$\sum\limits_{n=0}^{\infty}\frac{r^{k^{n}}}{\mu_{k^{n}}^{\gamma}}\gtrsim \sum_{j=1}^\infty \frac{r^j}{(j+1)\mu^\g_{2j+1}}, \quad 0<r<1.$$

Now, arguing as in \cite[(2.9)]{PelRatAdv2016}, it follows
\begin{equation*}
\label{s}
\sum_{j=1}^\infty \frac{r^j}{(j+1)\mu^\g_{2j+1}} \asymp \int_0^r \frac{dt}{(1-t)\mug(t)^\gamma},  \quad 0<r<1, 
\end{equation*}
and  we get \eqref{eq:suma1}.

Next, bearing in mind Lemma~\ref{caract. D check}(ii),
\begin{equation}\label{eq:suma3}
1+\int_0^r \frac{dt}{(1-t)\mug(t)^\gamma}
\le 1+ C^{\gamma}\frac{(1-r)^{\b\gamma}}{\mug(r)^\gamma}\int_0^r \frac{dt}{(1-t)^{1+\b\gamma}}
\lesssim \frac{1}{\mug(r)^\gamma}, \quad 0\le r<1.
\end{equation}
On the other hand, it is clear that
\begin{equation}\label{eq:suma4}
1+\int_0^r \frac{dt}{(1-t)\mug(t)^\gamma}
\gtrsim  \frac{1}{\mug(r)^\gamma}, \quad 0\le r\le \frac{1}{2},
\end{equation}
and because $\mu\in\DD$,
\begin{equation}\label{eq:suma5}
1+\int_0^r \frac{dt}{(1-t)\mug(t)^\gamma}
\ge \int_{2r-1}^r \frac{dt}{(1-t)\mug(t)^\gamma} \gtrsim \frac{1}{\mug(2r-1)^\gamma}\gtrsim \frac{1}{\mug(r)^\gamma}, \quad \frac{1}{2}<r<1.
\end{equation}
Consequently, \eqref{eq:suma1} together with \eqref{eq:suma3}, \eqref{eq:suma4} and \eqref{eq:suma5} implies \eqref{eq:suma2}.
This finishes the proof.
\end{proof}

Now we will prove a generalization of \cite[Lemma 3.1]{PavP} to the setting of fractional derivatives
induced by doubling weights, which is interesting on its own right.
 
\begin{proposition}
\label{pr:integralmeans}
Let $0<p<\infty$ and $\mu\in\DDD$. Then, there exists a constant $C=C(p,\mu)>0$ such that
\begin{equation}
\label{eq:integralmeans}
     M_p(r, D^{\mu}(f))\le C\frac{M_p(\r,f)}{\mug\left(\frac{r}{\r}\right)},\quad 0\le r<\r<1, \quad f\in\H(\D).
\end{equation}     
\end{proposition}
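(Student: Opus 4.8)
My plan is to reduce the two–radius inequality to a one–radius estimate by a dilation, and then to exploit the universal Ces\`aro decomposition of Proposition~\ref{pr:cesaro} together with the (automatic) smoothness of the moment function $x\mapsto\mu_x$. First I would dilate: writing $f_\rho(z)=f(\rho z)$, a glance at the Taylor coefficients shows $D^\mu f(re^{it})=D^\mu f_\rho\big((r/\rho)e^{it}\big)$, so that $M_p(r,D^\mu f)=M_p(r/\rho,D^\mu f_\rho)$, while $\|f_\rho\|_{H^p}=M_p(\rho,f)$. Thus, setting $g=f_\rho$ and $s=r/\rho\in[0,1)$, it is enough to prove the single–radius bound
\[
M_p(s,D^\mu g)\lesssim\frac{\|g\|_{H^p}}{\mug(s)},\qquad g\in H^p,\ 0\le s<1 .
\]

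Next I would decompose $g=\sum_{n}V_{n,k}\ast g$ and use $\sum_n\widehat{V_{n,k}}(j)=1$ to write $D^\mu g=\sum_n D^\mu(V_{n,k}\ast g)$ on each circle $|z|=s$. The point is the identity
\[
D^\mu(V_{n,k}\ast g)=\frac{1}{\mu_{k^n}}\,\big(W^{\Phi_n}_{k^{n-1}}\ast g\big),\qquad \Phi_n(t)=\psi(t)\,\frac{\mu_{k^n}}{\mu_{2k^{n-1}t+1}}\quad(n\ge1),
\]
valid because $W^{\Phi_n}_{k^{n-1}}(z)=\sum_j\Phi_n(j/k^{n-1})z^j$. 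Since $W^{\Phi_n}_{k^{n-1}}\ast g$ is supported on frequencies $\ge k^{n-1}$, factoring out $z^{k^{n-1}}$ and using the monotonicity of integral means gives $M_p(s,W^{\Phi_n}_{k^{n-1}}\ast g)\le s^{k^{n-1}}\|W^{\Phi_n}_{k^{n-1}}\ast g\|_{H^p}$. With $c=\min(1,p)$, I would apply the quasi–triangle inequality for $M_p$, Theorem~\ref{Th Polinomios Cesaro}(iii) (fixing $m\in\N$ with $mp>1$), and the uniform bound $A_{\Phi_n,m}\le C$ of the next step to obtain
\[
M_p(s,D^\mu g)^c\lesssim\|g\|_{H^p}^c\Big(1+\sum_{n\ge1}\frac{s^{c\,k^{n-1}}}{\mu_{k^n}^c}\Big).
\]
Using $\mu_{k^n}\asymp\mu_{k^{n-1}}$ (so I may reindex), Lemma~\ref{suma} with $\gamma=c$ and $r=s^c$ then yields $1+\sum_{n\ge1}s^{ck^{n-1}}\mu_{k^n}^{-c}\asymp\mug(s^c)^{-c}\asymp\mug(s)^{-c}$, the last comparison because $1-s^c\asymp1-s$ forces $\mug(s^c)\asymp\mug(s)$ when $\mu\in\DDD$ (Lemmas~\ref{caract. pesos doblantes}(ii) and \ref{caract. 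D check}(ii)). The $n=0$ term is a polynomial of fixed degree with coefficients $\lesssim\|g\|_{H^p}$ and is absorbed by the ``$1$'', since $\mug(s)^{-1}\ge\mug(0)^{-1}>0$.

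The hard part will be the uniform estimate $A_{\Phi_n,m}=\max|\Phi_n|+m\max|\Phi_n^{(m)}|\le C$, independent of $n$: this is where the regularity of the doubling weight must be turned into uniform smoothness of the symbols $\Phi_n$. Writing $u(x)=\mu_x$, which is $C^\infty$ on $[0,\infty)$ regardless of any regularity of $\mu$, Leibniz and the chain rule reduce matters to bounding $\mu_{k^n}(2k^{n-1})^l\,\big|(1/u)^{(l)}(x)\big|$ for $0\le l\le m$ and $x=2k^{n-1}t+1\asymp k^n$, $t\in\supp\psi$. The crucial input is the moment–derivative estimate
\[
\Big|\,u^{(j)}(x)\,\Big|=\Big|\int_0^1 r^x(\log r)^j\mu(r)\,dr\Big|\lesssim\frac{\mu_x}{x^{\,j}},\qquad x\ge1 ,
\]
which for $\mu\in\DD$ I would prove by splitting $\int_0^1=\int_0^{1-1/x}+\int_{1-1/x}^1$: on $[1-1/x,1)$ one has $|\log r|\lesssim1-r\le1/x$, so this part is $\lesssim x^{-j}\mu_x$; on $[0,1-1/x)$ one uses $r^x\le e^{-x(1-r)}$ with a dyadic splitting of $1-r$, controlling $\int_{1-2^i/x}^1\mu\asymp\mu_{x2^{-i}}\lesssim2^{i\eta}\mu_x$ by Lemma~\ref{caract. pesos doblantes}(iii),(v) and summing the rapidly decaying factors $e^{-2^{i-1}}$. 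Fa\`a di Bruno then yields $|(1/u)^{(l)}(x)|\lesssim\mu_x^{-1}x^{-l}$, and since $\mu_x\asymp\mu_{k^n}$ and $x\asymp k^n$ on $\supp\psi$ (again $\mu\in\DD$), the quantity above is $\lesssim(2k^{n-1}/k^n)^l\asymp1$, completing the uniform bound and hence, combined with the two previous steps, the proof.
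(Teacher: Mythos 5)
Your argument is correct and follows essentially the same route as the paper's proof: decompose with the universal Ces\'aro basis, realize $V_{n,k}\ast D^{\mu}f$ as a smooth Ces\'aro multiplier acting on $V_{n,k}\ast f$, invoke Theorem~\ref{Th Polinomios Cesaro}(iii) with a uniform bound on $A_{\Phi_n,m}$, and sum the resulting geometric tail via Lemma~\ref{suma}. Two variations are worth recording. First, your dilation $f\mapsto f_\rho$ reduces the two--radius inequality to a single--radius one; this removes the paper's case split between $\frac{r}{\rho}\ge\frac12$ and $\frac{r}{\rho}<\frac12$ and the use of $|\log\frac{r}{\rho}|\lesssim 1$, with the decay $s^{k^{n-1}}$ now supplied by the frequency support of $W^{\Phi_n}_{k^{n-1}}\ast g$ rather than by the factor $(\frac{r}{\rho})^x$ inside the symbol. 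Second, you take the symbol $\Phi_n(t)=\psi(t)\,\mu_{k^n}/\mu_{2k^{n-1}t+1}$ at scale $k^{n-1}$, which is automatically $C^\infty$ and compactly supported, whereas the paper works at scale $1$ with an ad hoc smooth extension of $x\mapsto(\frac{r}{\rho})^x/\mu_{2x+1}$ off $[2^{n-1},2^{n+1}-1]$; your choice avoids the extension step but forces you to prove the sharper moment estimate $\bigl|\partial_x^{\,j}\mu_x\bigr|\lesssim \mu_x x^{-j}$, where the paper only needs $\int_0^1 s^x(\log\frac1s)^j\mu(s)\,ds\lesssim\mu_x$. That sharper estimate is indeed valid for $\mu\in\DD$, but your dyadic sketch only controls the factor $(\log\frac1r)^j$ where $|\log r|\lesssim 1-r$; on the remaining piece $r<\frac12$ you should instead use $r^x(\log\frac1r)^j\le C_j x^{-j} r^{x/2}$ together with $\mu_{x/2}\lesssim\mu_x$ (Lemma~\ref{caract. pesos doblantes}(v)) — a one-line fix that closes the only small gap in the write-up.
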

\begin{proof}
We will split the proof in two cases according to the value of $r$ and $\r$.

{\bf{Case $\mathbf{1}$.  $\mathbf{\frac{1}{2}\le \frac{r}{\r}<1}$.}}
Bearing in mind \eqref{propervn1}, 
\begin{equation}\begin{split}\label{geq:p>1}
 M_p(r, D^{\mu}(f))
 \leq \sum\limits_{n=0}^{1} \|V_n* (D^{\mu} f )_r\|_{H^p} + \sum\limits_{n=2}^{\infty} \|V_n* (D^{\mu} f )_r\|_{H^p},  
\quad f \in \H(\D),
\end{split}\end{equation}
 for all $1<p<\infty$ and 
\begin{equation}\begin{split}\label{geq:p<1}
M_p^p(r, D^{\mu}(f)) \leq  \sum\limits_{n=0}^{1} \|V_n* (D^{\mu} f )_r\|^p_{H^p} + \sum\limits_{n=2}^{\infty} \|V_n* (D^{\mu} f )_r\|_{H^p}^p, \quad f\in \H(\D),
\end{split}\end{equation}
for $0<p\leq 1$, where $V_n=V_{n,2}$ are the polynomials defined in the statement of Proposition~\ref{pr:cesaro}.

Firstly,  
\cite[Lemma~3.1]{PavP} yields 
\begin{equation}\begin{split}\label{geq:im1}
\|V_0* (D^{\mu} f )_r\|_{H^p}& \leq \frac{|\widehat{f}(0)|}{\mu_1}+r\frac{|\widehat{f}(1)|}{\mu_3}
\\ & \leq
\frac{M_p(r, f)}{\mu_1}+r\frac{M_p(\frac{\r}{2}, f')}{\mu_3} 
\\ & \le \frac{M_p(r, f)}{\mu_1} + C(p)2r 
\frac{M_p(\r, f)}{\r\mu_3} 
\le  C(p) \frac{M_p(\r, f)}{\mug(\frac{r}{\r})}, \quad\frac{1}{2}\le \frac{r}{\r}<1.
\end{split}\end{equation}
The inequality 
\begin{equation}\label{geq:im1n}
\|V_1* (D^{\mu} f )_r\|_{H^p}\le C(p) \frac{M_p(\r, f)}{\mug(\frac{r}{\r})}, \quad \frac{1}{2}\le \frac{r}{\r}<1,
\end{equation}
can be proved analogously.
 Next, we will estimate from above the series in \eqref{geq:p>1} and \eqref{geq:p<1}. 
For each  $n \in \N, \, n\geq 2$, 
 let us consider 
the
 function
$$\varphi_n(x)=\frac{\left(\frac{r}{\r}\right)^x}{\mu_{2x+1}}\chi_{[2^{n-1}, 2^{n+1}-1]}(x), \quad \frac{1}{2}\le \frac{r}{\r}<1,$$
and fix $m \in \N$  such that $mp>1$. 
Observe that for each $k\in\N$, there is $C=C(k)>0$ such that
\begin{equation}
\label{gprop.}
 \int_0^1 s^{x}\left(\log\frac{1}{s}\right)^k\mu(s)ds\leq C\mu_{x-1}\le C \mu_x, \quad\text{for any}\,\,  x\ge 2.
\end{equation}
Since
$$ \varphi_n '(x)=-\frac{2\int_0^1 s^{2x+1}\log\frac{1}{s}\mu(s)ds}{(\mu_{2x+1})^2}\left(\frac{r}{\r}\right)^x +\frac{\left(\frac{r}{\r}\right)^x}{\mu_{2x+1}}\log \frac{r}{\r},\quad x\in (2^{n-1}, 2^{n+1}-1), $$ 
using \eqref{gprop.} and the inequality $\frac{1}{2}\le \frac{r}{\r}$, it follows that there is an absolute constant $C>0$ such that
$$\vert \varphi_n'(x)\vert \le C \frac{\left(\frac{r}{\r}\right)^x}{\mu_{2x+1}}, \quad x\in (2^{n-1}, 2^{n+1}-1),
 \quad\frac{1}{2}\le \frac{r}{\r}<1.$$ 

By this way, using \eqref{gprop.} and an induction process on $m$,  
there is $C=C(m)=C(p)>0$ such that  
 $$\vert \varphi_n ^{(m)}(x)\vert \le C \frac{\left(\frac{r}{\r}\right)^x}{\mu_{2x+1}}, \quad x\in (2^{n-1}, 2^{n+1}-1), 
 \quad \frac{1}{2}\le \frac{r}{\r}<1.$$
 
Now, for each $n\in\N\setminus\{1\}$, choose a $C^\infty$-function $\Phi_n$ with compact support contained in $[2^{n-2},2^{n+2}]$ such that $\Phi_n=\varphi_n$ in $[2^{n-1}, 2^{n+1}-1]$. Then, bearing in mind Lemma \ref{caract. pesos doblantes}~(v), 
there is $C=C(p,\mu)>0$ such that
\begin{equation}\label{geq:im2}
A_{\Phi_n,m}\le C \max\limits_{x \in [2^{n-1}, 2^{n+1}-1]}\frac{\left(\frac{r}{\r}\right)^x}{\mu_{2x+1}}\leq C \frac{\left(\frac{r}{\r}\right)^{2^{n-1}}}{\mu_{2^{n+2}+1}} 
\le C \frac{\left(\frac{r}{\r}\right)^{2^{n-1}}}{\mu_{2^{n}}},\quad \frac{1}{2}\le \frac{r}{\r}<1.
\end{equation}
Moreover,
\begin{equation*}
\begin{split}
 V_n*(D^{\mu} f)_r(z)&=\sum\limits_{k=2^{n-1}}^{2^{n+1}-1}\psi\left(\frac{k}{2^{n-1}}\right)\frac{\left(\frac{r}{\r}\right)^k}{\mu_{2k+1}}\widehat{f}(k)\r^k z^k
 \\ &=\sum\limits_{k=2^{n-1}}^{2^{n+1}-1}\psi\left(\frac{k}{2^{n-1}}\right)\Phi_n(k)\widehat{f}(k)\r^{k}z^k\\
 &=(W_1^{\Phi_n}*V_n*f_{\r})(z).
\end{split}
\end{equation*}
So, Theorem \ref{Th Polinomios Cesaro}(iii), \eqref{geq:im2} and \eqref{propervn2} imply that
for each $n\in\N\setminus\{1\}$
\begin{equation*}
\begin{split}
\Vert V_n*(D^{\mu} f)_r\Vert_{H^p} &\le C A_{\Phi_n, m}\Vert V_n*f_{\r}\Vert_{H^p}
\\ & \le C \frac{\left(\frac{r}{\r}\right)^{2^{n-1}}}{\mu_{2^{n}}}\Vert V_n*f_{\r}\Vert_{H^p}
\le C \frac{\left(\frac{r}{\r}\right)^{2^{n-1}}}{\mu_{2^{n}}}M_p(\r, f), \quad\frac{1}{2}\le \frac{r}{\r}<1,
\end{split}
\end{equation*}
where $C=C(p,\mu)>0$.
So, by Lemma \ref{suma}, there is $C=C(\mu,p)>0$ such that 
\begin{equation}
\begin{split}\label{geq:mi3}
\sum\limits_{n=2}^{\infty} \|V_n* (D^{\mu} f )_r\|_{H^p}
  &\le C M_p(\r, f)\left(\sum\limits_{n=2}^{\infty}\frac{\left(\frac{r}{\r}\right)^{2^{n-1}}}{\mu_{2^{n}}}\right)
\\ & \le C  \frac{M_p(\r,f)}{\mug\left(\frac{r}{\r}\right)}, \quad  \frac{1}{2}\le \frac{r}{\r}<1, 
\, \, f\in \H(\D),\, \,1<p<\infty.
\end{split}
\end{equation}
Analogously, using Lemma \ref{suma} again 
\begin{equation}
\begin{split}\label{geq:mi4}
\sum\limits_{n=2}^{\infty} \|V_n* (D^{\mu} f )_r\|_{H^p}^p  &
\le C M_p^p(\r, f)\left(\sum\limits_{n=2}^{\infty}\frac{\left(\frac{r}{\r}\right)^{p2^{n-1}}}{\mu_{2^{n}}^p}\right)
\\ & \le C  \frac{M_p^p(\r,f)}{\left[\mug \left(\left(\frac{r}{\r}\right)^p\right) \right]^p}
 \le C \frac{M_p^p(\r,f)}{\left(\mug\left(\frac{r}{\r}\right)\right)^p}, \quad  \frac{1}{2}\le \frac{r}{\r}<1,
\,\, f\in \H(\D),\,\, 0<p\le 1,
\end{split}
\end{equation}
where in the last inequality we have used  $\mu\in\DD$. Finally, joining \eqref{geq:p>1}, \eqref{geq:im1}, \eqref{geq:im1n}
 and \eqref{geq:mi3} we obtain  \eqref{eq:integralmeans} for $p>1$, and in the case $0<p\le 1$ \eqref{eq:integralmeans} follows from
\eqref{geq:p<1}, \eqref{geq:im1}, \eqref{geq:im1n}
 and \eqref{geq:mi4}.

{\bf{Case $\mathbf{2}$.  $\mathbf{0\le \frac{r}{\r}<\frac{1}{2}}$.}} 
Observe that \eqref{eq:integralmeans} has already been proved for any $\rho>0$ and  $r=\frac{\r}{2}$.
So, 
\begin{equation*}\begin{split}
M_p(r,D^{\mu} f) &\le M_p\left( \frac{\r}{2},D^{\mu} f\right)
\\ &\le 
C(p,\mu) \frac{ M_p\left( \r, f\right)}{\mug\left(\frac{1}{2}\right)}
\\ &\le C(p,\mu) \frac{ M_p\left( \r,f\right)}{\mug\left(\frac{r}{\r}\right)}, \quad 0\le \frac{r}{\r}<\frac{1}{2}.
\end{split}\end{equation*}
 This finishes the proof.
\end{proof}

\begin{Prf}{\em{ Theorem~\ref{theorem:L-P-D-hat}.}}
Assume  $\om\in\DD$. 
Without loss of generality we may assume that $f\in A^p_\om$.
So, using that $\mu\in\DD$ and Proposition~\ref{pr:integralmeans}, 
there is $C=C(\om,\mu,p)>0$ such that
\begin{equation}\begin{split}\label{eq:suf1}
\Vert D^{\mu} f \Vert_{A^p_{\mug^p\om}}^p &\le C \int_{\frac{1}{2}}^1 M_p^p(r, D^{\mu}(f)) \om (r)\mug(r)^p dr
\\ & \le 
 C \int_{\frac{1}{2}}^1 M_p^p(\sqrt{r}, f) \frac{\mug(r)^p}{\mug(\sqrt{r})^p} \om (r)\,dr
 \\ & \le C  \int_{\frac{1}{2}}^1 M_p^p(\sqrt{r}, f)  \om (r)\,dr
 \\ & = 2 C \int_{\frac{1}{\sqrt 2}}^1 rM_p^p(r, f) \om (r^2 )dr.
\end{split} \end{equation}
Next, since  $f\in A^p_\om$,
 $$\| f\|^p_{A^p_\om}\ge \int_{r}^{1} M_p^p(s,f)\om(s)\,ds\ge M_p^p(r,f)\omg(r)\to 0\,\quad \text{as $r\to 1^-$}.$$ 

So, two integration by parts and an application of Lemma~\ref{caract. pesos doblantes}~(ii) yield 
\begin{align*}
2 \int_{\frac{1}{\sqrt 2}}^1 r M_p^p(r, f) \om (r^2 )dr
& =  M_p^p \left(\frac{1}{\sqrt{2}}, f\right)\omg\left(\frac{1}{2}\right)+
\int_{\frac{1}{\sqrt 2}}^1 \left[\frac{d}{dr}
M_p^p(r, f)\right] \omg (r^2 )\,dr
\\ & \le  M_p^p \left(\frac{1}{\sqrt{2}}, f\right)\omg\left(\frac{1}{2}\right)+
C\int_{\frac{1}{\sqrt 2}}^1 \left[\frac{d}{dr}
M_p^p(r, f)\right] \omg (r )\,dr, 
\\ &
\le   M_p^p \left(\frac{1}{\sqrt{2}}, f\right)\omg\left(\frac{1}{2}\right)+
C\int_{\frac{1}{ 2}}^1 r M_p^p(r, f) \om (r )dr
\le C\|f\|^p_{A^p_\om},
\end{align*}
which together with \eqref{eq:suf1} implies \eqref{eq:desLP}.

Reciprocally, assume that \eqref{eq:desLP} holds. 
By choosing  $f_n(z)=z^n$,  $n \in \N\cup\{0\}$, in \eqref{eq:desLP} we obtain
$$\int_{\D} \frac{\vert z  \vert^{np}}{\mu_{2n+1}^p}\mug(z)^p \om(z)dA(z)\leq C^p\int_{\D} \vert z\vert^{np} \om(z)dA(z),  \; n \in \N\cup\{0\}. $$
Since $\mu\in\DD$, by Lemma~\ref{caract. pesos doblantes}~(iii) there exists $C=C(\mu)>0$ such that $\mu_{2n+1}\leq C\mug\left( 1-\frac{1}{n+1}\right)$, $n \in \N\cup\{0\}$.
Therefore,
$$\int_{\D} \frac{\vert z  \vert^{np}}{\mug\left( 1-\frac{1}{n+1}\right)^p}\mug(z)^p \om(z)dA(z)\leq C^p\int_{\D} \vert z\vert^{np} \om(z)dA(z), \; n \in \N\cup\{0\}. $$
If $x\geq 1$, we can find $m \in \N$ such that $m\leq x<m+1$. By applying the previous inequality to $n=m+1$, 
$$ \int_0^1 \frac{s^{(m+1)p+1}}{\mug\left( 1-\frac{1}{m+2}\right)^p}\mug(s)^p \om(s)ds\leq C^p\int_0^1 s^{(m+1)p+1} \om(s)ds\leq C^p\int_0^1 s^{xp+1}\om(s)ds, $$
Moreover, bearing in mind the monotonicity of $s^x$ and $\widehat{\mu}(s)$ 
  there exist $C=C(\om, \mu,p)>0$ such that
  \begin{equation*}\begin{split}
\int_0^1 \frac{s^{(m+1)p+1}}{\mug\left( 1-\frac{1}{m+2}\right)^p}\mug(s)^p \om(s)ds
&\geq \int_0^1 \frac{s^{mp+p+1}}{\mug\left( 1-\frac{1}{x}\right)^p}\mug(s)^p \om(s)ds
\\ &\ge C \int_0^1 \frac{s^{mp+1}}{\mug\left( 1-\frac{1}{x}\right)^p}\mug(s)^p \om(s)ds
\\ & \ge C \int_0^1 \frac{s^{xp+1}}{\mug\left( 1-\frac{1}{x}\right)^p}\mug(s)^p \om(s)ds. 
\end{split}\end{equation*}

Therefore, there exists $C=C(\om, \mu,p)> 1$ such that
$$ \int_0^1 \frac{s^{xp}}{\mug\left( 1-\frac{1}{x}\right)^p}\mug(s)^p \om(s)ds\leq C^p\int_0^1 s^{xp}\om(s)ds ,\quad\text{  for all $ x\geq 1$}.$$ 
That is,
\begin{equation}\label{eq:j1}
\int_0^1 s^{xp}\om(s)\left(\left(\frac{\mug(s)}{\mug\left( 1-\frac{1}{x}\right)}\right)^p- C^p\right)ds \leq 0,
\quad\text{  for all $ x\geq 1$}.
\end{equation}
  Take $k_1\ge 1$  such that $\mug\left( 1-\frac{1}{x}\right)<\frac{\mug(0)}{C}$ for $x\geq k_1$. Then, for any $x\ge k_1$ there exists
 $s_x=s_x(x,C, \mu)\in (0,1-\frac{1}{x})$,  the infimum of the points $s\in (0,1-\frac{1}{x})$
such that
$\frac{\mug(s)}{\mug\left( 1-\frac{1}{x}\right)}=C.$ 
By \eqref{eq:j1},
\begin{equation*}\begin{split}
\int_0^{s_x} s^{xp}\om(s)\left(\left(\frac{\mug(s)}{\mug\left( 1-\frac{1}{x}\right)}\right)^p-C^p\right)ds & \leq \int_{s_x}^1 s^{xp}\om(s)\left(C^p-\left(\frac{\mug(s)}{\mug\left( 1-\frac{1}{x}\right)}\right)^p\right)\,ds 
\\ & \leq C^p \omg(s_x),\quad\text{  for all $x \geq k_1$}.
\end{split}\end{equation*}
So,
$$\omg(s_x)\geq C^{-p}\int_0^{s_x} s^{xp}\om(s)\left(\left(\frac{\mug(s)}{\mug\left( 1-\frac{1}{x}\right)}\right)^p-C^p\right)ds. $$
Next, choose  $k_2\geq k_1$ such that $\mug\left( 1-\frac{1}{x}\right)<\frac{\mug(0)}{\left(\frac{3}{2}\right)^{1/p}C}$ if$x\geq k_2$.
 So, for any $x\ge k_2$, 
there exists $r_x=r_x(x,C, \mu)\in (0,1-\frac{1}{x})$, the infimum of the points $r\in (0,1-\frac{1}{x})$
such that $
\frac{\mug(r)}{\mug\left( 1-\frac{1}{x}\right)}= \left(\frac{3}{2}\right)^{1/p}C.$ 
 Then, $r_x < s_x<1-\frac{1}{x}$ and
\begin{equation*}\begin{split}
  \omg(s_x) &\geq C^{-p}\int_0^{s_x} s^{xp}\om(s)\left(\left(\frac{\mug(s)}{\mug\left( 1-\frac{1}{x}\right)}\right)^p-C^p\right)ds
  \\ & \geq \frac{1}{2}  \int_0^{r_x} s^{xp}\om(s)ds, \quad\text{ for any $x\geq k_2$.}
\end{split}\end{equation*}
By  Fubini's theorem, 

\begin{equation*}\begin{split}
  2\omg(s_x)&\geq  \int_0^{r_x} s^{xp}\om(s)ds=\int_0^{r_x} \om(s)\left(\int_0^s px t^{xp-1}dt\right) ds
  \\ & = \int_0^{r_x} px s^{xp-1} (\omg(s)-\omg(r_x))ds
  \\ & \geq\int_0^{r} px s^{xp-1}\omg(s)ds-\omg(r_x)\int_0^{r_x} px s^{xp-1}ds
  \\ & \geq\omg(r)r^{px}-\omg(r_x)r_x^{px}, \; 0<r<r_x<1.
\end{split}\end{equation*}
Then, for any $x\ge k_2$ and $0<r<r_x<1$,
$$ \omg(r)r^{px}\leq 2\omg(s_x)+\omg(r_x)r_x^{px}\leq (2+r_x^{px})\omg(r_x)\leq 3\omg(r_x).$$

It is clear that $r_x>\frac{1}{2}$ if $k_2$ is large enough. In this case, take $r=2r_x-1$ in the previous inequality, that is
\begin{equation}\label{eq:th11}
 \omg(r)\leq 3 r^{-px}\omg\left(\frac{1+r}{2}\right), \quad 0<r=2r_x-1<r_x<1, \quad x\ge k_2.
 \end{equation}
 Since $\mu \in \Dd$, by Lemma~\ref{caract. D check}~(ii)  there exist $C_2=C_2(\mu)>0$ and $\b=\b(\mu)>0$ such that
$$\left(\frac{3}{2}\right)^{1/p}C= \frac{\mug(r_x)}{\mug\left( 1-\frac{1}{x}\right)}\geq C_2(x(1-r_x))^{\b}.$$ 
So, 
$$ x\leq \frac{\left(\frac{(\frac{3}{2})^{\frac{1}{p}}C}{C_2}\right)^{\frac{1}{\b}}}{1-r_x}=\frac{2\left(\frac{(\frac{3}{2})^{\frac{1}{p}}C}{C_2}\right)^{\frac{1}{\b}}}{1-r}.$$
Then, for $r\geq r_0= 1-\frac{2}{k_2}\left(\frac{(\frac{3}{2})^{\frac{1}{p}}C}{C_2}\right)^{\frac{1}{\b}},$ 
$$ r^{px}\geq \left(\inf\limits_{r_0\leq r<1} r^{\frac{1}{1-r}}\right)^{2\left(\frac{(\frac{3}{2})^{\frac{1}{p}}C}{C_2}\right)^{\frac{1}{\b}}p}=C_3=C_3(\omega,\mu,p)>0,$$
which together with \eqref{eq:th11}
 yields $\omg(r)\leq 3 C_3 \omg(\frac{1+r}{2})$, for $r_0\leq r <1$. Therefore $\om \in \DD.$

\end{Prf}

\section{Proof of Theorem~\ref{th:L-P-D}.}
We begin this section proving a technical result on $L^p$-integrability of power series with nonnegative coefficients.
We use ideas from the proofs of \cite[Theorem~6]{MatelPav} and \cite[Proposition~9]{PelRathg}.

\begin{proposition}
\label{integral real suma}
Let $0<p<\infty$, $\eta \in \DDD$  and $k \in \N\setminus\{1\}$  such that \eqref{D chek y k} holds for $\eta$. 
Let be $f(r)=\sum\limits_{j=0}^{\infty}a_j r^j$ 
where $a_j\geq 0$ for all $j \in \N \cup \{0\}$. If $t_0=\sum\limits_{j=0}^{k-1}a_j$ and
 $t_n=\sum\limits_{j=k^{n}}^{k^{n+1}-1} a_j$, $n \in \N$.  Then, there exist positive constants $C_1=C_1(p, \eta)>0$ and 
 $C_2=C_2(p, \eta)>0$
  such that
$$ C_1 \sum\limits_{n=0}^{\infty}\eta_{k^n}t_n^p \leq \int_0^1 f(s)^p \eta(s)\,ds\leq C_2 \sum\limits_{n=0}^{\infty}\eta_{k^n}t_n^p.$$

\end{proposition}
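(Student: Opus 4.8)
The plan is to run a Littlewood--Paley type decomposition of $[0,1)$ adapted to the $k$-adic scale $r_n=1-k^{-n}$, $n\ge 0$. The two structural facts I will use repeatedly are that $1-\frac{1-r_n}{k}=r_{n+1}$, so the half-open intervals $[r_n,r_{n+1})$ partition $[0,1)$ and the hypothesis \eqref{D chek y k} reads $\int_{r_n}^{r_{n+1}}\eta\ge C\widehat{\eta}(r_n)$, and that $\eta_{k^n}\asymp\widehat{\eta}(r_n)$ with $\eta_{k^n}\asymp\eta_{k^{n+1}}$, both consequences of $\eta\in\DD$ via Lemma~\ref{caract. pesos doblantes}~(iii) and~(v). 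Since $f$ has nonnegative coefficients it is nondecreasing on $[0,1)$, and on each block $B_n=\{k^n,\dots,k^{n+1}-1\}$ one has $s^{k^{n+1}}\le s^j\le s^{k^n}$ for $s\in[0,1)$ and $j\in B_n$.

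For the lower bound I would discard all but the block $B_n$ and integrate only over $[r_{n+1},r_{n+2})$. There $s\ge r_{n+1}$ forces $s^{j}\ge r_{n+1}^{k^{n+1}}\ge c_k>0$ uniformly in $n$ (as $(1-k^{-(n+1)})^{k^{n+1}}$ increases to $e^{-1}$), whence $f(s)\ge\sum_{j\in B_n}a_js^j\ge c_kt_n$. Using \eqref{D chek y k} with $r=r_{n+1}$ gives $\int_{r_{n+1}}^{r_{n+2}}\eta\ge C\widehat{\eta}(r_{n+1})\asymp\eta_{k^n}$, and because the intervals $[r_{n+1},r_{n+2})$ are pairwise disjoint, summing yields $\int_0^1 f^p\eta\ge C\sum_n t_n^p\eta_{k^n}$. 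This is where $\eta\in\Dd$ enters.

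For the upper bound I split $\int_0^1 f^p\eta=\sum_N\int_{r_N}^{r_{N+1}}f^p\eta$ and use monotonicity of $f$ together with $\int_{r_N}^{r_{N+1}}\eta\le\widehat{\eta}(r_N)\asymp\eta_{k^N}$ to reduce matters to proving $\sum_N\eta_{k^N}f(r_{N+1})^p\lesssim\sum_n t_n^p\eta_{k^n}$. Writing $\rho_{N,n}=r_{N+1}^{k^n}$ I have $f(r_{N+1})\le\sum_n t_n\rho_{N,n}$ with $\rho_{N,n}\le1$ for $n\le N$ and $\rho_{N,n}\le e^{-k^{n-N-1}}$ for $n>N$. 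When $0<p\le1$ subadditivity gives $f(r_{N+1})^p\le\sum_n t_n^p\rho_{N,n}^p$, so after interchanging the order of summation it suffices to check $\sum_N\eta_{k^N}\rho_{N,n}^p\lesssim\eta_{k^n}$; I would do this by splitting at $N=n$, using the geometric decay $\eta_{k^N}\lesssim k^{-(N-n)\beta}\eta_{k^n}$ from Lemma~\ref{caract. D check}~(ii) for $N\ge n$ (where $\rho_{N,n}^p\le1$), and the polynomial growth $\eta_{k^N}\lesssim k^{(n-N)\eta}\eta_{k^n}$ from Lemma~\ref{caract. pesos doblantes}~(v) for $N<n$ (where the doubly exponential decay of $\rho_{N,n}^p$ dominates).

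The genuinely delicate case is $p>1$, which I regard as the main obstacle, since the cross terms in $f(r_{N+1})^p$ cannot be separated by subadditivity. Here I would bound $f(r_{N+1})\lesssim A_N+B_N$ with head $A_N=\sum_{n\le N}t_n$ and tail $B_N=\sum_{n>N}t_n\rho_{N,n}$, so that $f(r_{N+1})^p\lesssim A_N^p+B_N^p$. The tail contributes $\sum_N\eta_{k^N}B_N^p\lesssim\sum_n t_n^p\eta_{k^n}$ exactly as above, applying Hölder against the summable kernel $e^{-k^{n-N-1}}$ and then Lemma~\ref{caract. pesos doblantes}~(v). The head reduces to the discrete weighted Hardy inequality $\sum_N\eta_{k^N}\big(\sum_{n\le N}t_n\big)^p\lesssim\sum_n\eta_{k^n}t_n^p$, which I would prove by a Hölder argument inserting a geometric weight $\sigma^{N-n}$ with $1<\sigma<k^{\beta}$: Hölder peels off a convergent factor $\big(\sum_{m\ge0}\sigma^{-mp'/p}\big)^{p/p'}$, and after swapping the two sums the inner series $\sum_{N\ge n}\eta_{k^N}\sigma^{N-n}\lesssim\eta_{k^n}\sum_{m\ge0}(k^{-\beta}\sigma)^m\asymp\eta_{k^n}$ converges precisely because $k^{-\beta}\sigma<1$, that is, because $\eta\in\Dd$ forces geometric decay of $\{\eta_{k^N}\}$. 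Assembling the head and tail estimates completes the upper bound and hence the proposition.
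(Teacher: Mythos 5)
Your proof is correct. The lower bound and the case $0<p\le 1$ run essentially along the paper's own lines: the lower estimate restricts $f$ to the single block $B_n$ on $[1-k^{-(n+1)},1-k^{-(n+2)})$ and combines \eqref{D chek y k} with Lemma~\ref{caract. pesos doblantes}~(iii), exactly as in the paper, while for $p\le 1$ both arguments come down to $p$-subadditivity applied to $f(s)\le a_0+\sum_n t_n s^{k^n}$ (the paper integrates $s^{k^np}\eta(s)$ directly and uses $\eta_{k^np}\asymp\eta_{k^n}$; you discretize first and sum the kernel $\sum_N\eta_{k^N}\rho_{N,n}^p$ — interchangeable). The genuine divergence is the upper bound for $p>1$. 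There the paper applies H\"older with the auxiliary weight $\eta_{k^n}^{-\gamma}$, $0<\gamma p/p'<1$, invokes Lemma~\ref{suma} to identify $\sum_n s^{k^n}\eta_{k^n}^{-\gamma}$ with $\widehat{\eta}(s)^{-\gamma}$ (this is where $\eta\in\DDD$ enters), and finishes with the integral estimate $\eta_{k^n}^{\gamma p/p'}\int_0^1 s^{k^n}\eta(s)\widehat{\eta}(s)^{-\gamma p/p'}\,ds\lesssim\eta_{k^n}$. You instead split $f(r_{N+1})$ into head and tail, kill the tail by H\"older against the doubly exponentially decaying kernel $e^{-k^{\,n-N-1}}$, and reduce the head to the discrete weighted Hardy inequality $\sum_N\eta_{k^N}\bigl(\sum_{n\le N}t_n\bigr)^p\lesssim\sum_n\eta_{k^n}t_n^p$, proved with the geometric weight $\sigma^{N-n}$, $1<\sigma<k^{\beta}$; the convergence of $\sum_{N\ge n}\eta_{k^N}\sigma^{N-n}$ is precisely the geometric decay of the moments that $\eta\in\Dd$ supplies through Lemma~\ref{caract. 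D check}~(ii) and Lemma~\ref{caract. pesos doblantes}~(iii). Both routes use $\DD$ and $\Dd$ at the same junctures; yours is more elementary and self-contained, bypassing Lemma~\ref{suma} and the computation quoted from \cite{PelRatAdv2016}, at the cost of an extra discretization step, whereas the paper's version stays with the continuous variable $s$ and reuses machinery it needs elsewhere.
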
 
\begin{proof}
First, we show the lower estimate
\begin{align*}
\int_0^1 f(s)^p \eta(s)\,ds &\geq \sum\limits_{n=0}^{\infty} \int_{1-\frac{1}{k^{n+1}}}^{1-\frac{1}{k^{n+2}}}\left(\sum_{m=0}^{k-1}a_ms^m+\sum\limits_{j=1}^{\infty}\sum\limits_{m=k^j}^{k^{j+1}-1}a_m s^m\right)^p\eta(s)ds
\\ &\geq \sum\limits_{n=0}^{\infty} \int_{1-\frac{1}{k^{n+1}}}^{1-\frac{1}{k^{n+2}}} s^{pk^{n+1}} t_n^p \eta(s)ds
\\& \ge C(p,\eta) \sum\limits_{n=0}^{\infty} t_n^p \int_{1-\frac{1}{k^{n+1}}}^{1-\frac{1}{k^{n+2}}} \eta(s)ds \\
&=C(p,\eta) \sum\limits_{n=0}^{\infty} t_n^p \left(\etag\left(1-\frac{1}{k^{n+1}}\right)- \etag\left(1-\frac{1}{k^{n+2}}\right)\right).
\end{align*}
Since \eqref{D chek y k} holds for $k$ and $\eta$, there exists a constant $C=C(\eta)>1$ such that  
$\etag\left(1-\frac{1}{k^{n+1}}\right)\geq C \etag\left(1-\frac{1}{k^{n+2}}\right)$.
This together with  Lemma \ref{caract. pesos doblantes}(iii) yields
$$\int_0^1 f(s)^p \eta(s)\,ds\ge C(p,\eta)\sum\limits_{n=0}^{\infty} t_n^p\etag\left(1-\frac{1}{k^{n+2}}\right)
\ge C(p,\eta) \sum\limits_{n=0}^{\infty}\eta_{k^n}t_n^p.$$

In order to show the upper bound, observe that
\begin{equation}\label{eq:rs1}
f(s)\leq a_0+ \sum\limits_{n=0}^{\infty} s^{k^n} t_n.
\end{equation}
If $0<p\leq 1$, by
Lemma \ref{caract. pesos doblantes}~(v), there is $C=C(p,\eta)>0$ such that
\begin{equation*}\begin{split}
 \int_0^1 f(s)^p \eta(s)ds &\le 
 a_0^p\etag(0)+\int_0^1 \sum\limits_{n=0}^{\infty} s^{k^n p} t_n^p \eta (s)ds
\\ &=  a_0^p\etag(0)+ \sum\limits_{n=0}^{\infty} t_n^p \eta_{k^{n}p}
\\ & 
  \le C\left( t_0^p \eta_{k^0}+  \sum\limits_{n=0}^{\infty} t_n^p \eta_{k^n}
  \right) \le C \sum\limits_{n=0}^{\infty} t_n^p \eta_{k^n}.
 \end{split}\end{equation*}
If $1<p<\infty$, take $\gamma$ such that $0<\frac{\gamma p}{p'}<1$. Then, by \eqref{eq:rs1}, Hölder's inequality and Lemma \ref{suma}, we obtain
\begin{align*}
f(s)\leq a_0+ \left(\sum\limits_{n=0}^{\infty} \frac{s^{k^n}}{\eta_{k^n}^{\gamma}} \right)^{\frac{1}{p'}}\left(\sum\limits_{n=0}^{\infty} s^{k^n}t_n^p \eta_{k^n}^{\frac{\gamma p}{p'} } \right)^{\frac{1}{p}}\lesssim a_0+ \frac{1}{\etag(s)^{\frac{\gamma}{p'}}}\left(\sum\limits_{n=0}^{\infty} s^{k^n}t_n^p \eta_{k^n}^{\frac{\gamma p}{p'} } \right)^{\frac{1}{p}}
\end{align*}
which yields
\begin{equation}\begin{split}\label{eq:rs2}
\int_0^1 f(s)^p \eta(s)\,ds &\lesssim a_0^p\etag(0)+ \sum\limits_{n=0}^{\infty} t_n^p \eta_{k^n}^{\frac{\gamma p}{p'}} \int_0^1 s^{k^n} 
 \frac{\eta (s)}{\etag(s)^{\frac{\gamma p}{p'}}} ds
 \\ & \lesssim t_0^p \eta_{k^0}+ \sum\limits_{n=0}^{\infty} t_n^p \eta_{k^n}^{\frac{\gamma p}{p'}} \int_0^1 s^{k^n}  \frac{\eta (s)}{\etag(s)^{\frac{\gamma p}{p'}}} ds.
 \end{split}\end{equation}
Next, let us prove that $$ \eta_{k^n}^{\frac{\gamma p}{p'}} \int_0^1 s^{k^n}  \frac{\eta (s)}{\etag(s)^{\frac{\gamma p}{p'}}} ds\lesssim \eta_{k^n},$$
which together with \eqref{eq:rs2}  finishes the proof. Indeed, by Lemma~\ref{caract. pesos doblantes}~(iii)
$$  \eta_{k^n}^{\frac{\gamma p}{p'}} \int_0^{1-\frac{1}{k^n}} s^{k^n}  \frac{\eta (s)}{\etag(s)^{\frac{\gamma p}{p'}}} ds\leq \frac{ \eta_{k^n}^{\frac{\gamma p}{p'}}}{\etag\left(1-\frac{1}{k^n}\right)^{\frac{\gamma p}{p'}}} \int_0^{1} s^{k^n} \eta (s) ds \asymp \eta_{k^n},$$ 
 Morever,  an integration and another application of Lemma~\ref{caract. pesos doblantes}~(iii) imply
 $$ \eta_{k^n}^{\frac{\gamma p}{p'}} \int_{1-\frac{1}{k^n}}^1 s^{k^n}  \frac{\eta (s)}{\etag(s)^{\frac{\gamma p}{p'}}} ds \leq \eta_{k^n}^{\frac{\gamma p}{p'}} \int_{1-\frac{1}{k^n}}^1  \frac{\eta (s)}{\etag(s)^{\frac{\gamma p}{p'}}} ds\asymp
  \eta_{k^n}^{\frac{\gamma p}{p'}} \etag\left(1-\frac{1}{k^n}\right)^{1-\frac{\gamma p}{p'}} \lesssim \eta_{k^n}. $$
This finishes the proof.
\end{proof}

The right choice of the norm used is in many cases a key to a good understanding of how a
concrete operator acts in a given space. Here, the following decomposition result provides 
an effective tool for the study  of the fractional derivative $D^\mu$.

\begin{proposition}
\label{desc. norma bloques momentos y Vn}
Let $0<p<\infty$,  $\eta \in \DDD$ and $k=k(\eta)>1$, $k\in \N$ such that \eqref{D chek y k} holds for $\eta$ and $k$. If
$\{ V_{n,k}\}_{n=0}^{\infty}$ is a sequence of polynomials considered in Proposition~\ref{pr:cesaro}, then there are
constants $C_1=C_1(p, \eta)>0$ and $C_2=C_2(p, \eta)>0$ such that
$$ C_1\sum\limits_{n=0}^{\infty} \eta_{k^n}\Vert V_{n,k}\ast f\Vert_{H^p}^p\leq \nm{f}_{A_{\eta}^p}^p\leq C_2 \sum\limits_{n=0}^{\infty} \eta_{k^n}\Vert V_{n,k}\ast f\Vert_{H^p}^p, \quad f\in\H(\D).$$
\end{proposition}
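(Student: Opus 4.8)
The plan is to pass to integral means and prove the two inequalities separately, building everything on the decomposition $f=\sum_{n\ge0}V_{n,k}\ast f$ from \eqref{propervn1} together with the uniform bound \eqref{propervn2}. Since $\|f\|_{A^p_\eta}^p=2\int_0^1 M_p^p(r,f)\,\eta(r)\,r\,dr$, it suffices to compare $\int_0^1 M_p^p(r,f)\eta(r)\,dr$ with $\sum_{n}\eta_{k^n}\|V_{n,k}\ast f\|_{H^p}^p$, the factor $r$ being harmless on the region that matters. I will use repeatedly that $\eta_{k^{n-1}}\asymp\eta_{k^n}$ and $\eta_{pk^{n-1}}\asymp\eta_{k^n}$, both consequences of $\eta\in\DD$ via Lemma~\ref{caract. pesos doblantes}, and that $\etag(r_n)\asymp\eta_{k^n}$ for $r_n:=1-k^{-(n+1)}$, by Lemma~\ref{caract. pesos doblantes}(iii).

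For the \emph{upper bound}, I would exploit that for $n\ge1$ one can factor $V_{n,k}\ast f=z^{k^{n-1}}g_n$ with $g_n\in\H(\D)$; since $|e^{it}|=1$ this yields the elementary dilation estimate $M_p(r,V_{n,k}\ast f)=r^{k^{n-1}}M_p(r,g_n)\le r^{k^{n-1}}\|V_{n,k}\ast f\|_{H^p}$. When $p\ge1$, the triangle inequality in $H^p$ applied to \eqref{propervn1} gives $M_p(r,f)\le F(r):=\|V_{0,k}\ast f\|_{H^p}+\sum_{n\ge1}\|V_{n,k}\ast f\|_{H^p}\,r^{k^{n-1}}$. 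As $F$ is a power series with nonnegative coefficients, Proposition~\ref{integral real suma} applies directly: its $k$-adic block sums are $t_m=\|V_{m+1,k}\ast f\|_{H^p}$ for $m\ge1$ and $t_0\asymp\|V_{0,k}\ast f\|_{H^p}+\|V_{1,k}\ast f\|_{H^p}$, so $\int_0^1 F^p\eta\asymp\sum_m\eta_{k^m}t_m^p\asymp\sum_{n\ge0}\eta_{k^n}\|V_{n,k}\ast f\|_{H^p}^p$. When $0<p\le1$ one argues more directly: subadditivity of $x\mapsto x^p$ gives $M_p^p(r,f)\le\|V_{0,k}\ast f\|_{H^p}^p+\sum_{n\ge1}\|V_{n,k}\ast f\|_{H^p}^p r^{pk^{n-1}}$, and integrating termwise against $\eta$ produces $\sum_n\eta_{pk^{n-1}}\|V_{n,k}\ast f\|_{H^p}^p\asymp\sum_n\eta_{k^n}\|V_{n,k}\ast f\|_{H^p}^p$.

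For the \emph{lower bound}, the crux is to recover the full $H^p$-norm of $V_{n,k}\ast f$ from the dilated datum $f_{r_n}$ in one step. Taking $\Phi_n(x)=\psi(x)\,r_n^{-k^{n-1}x}$, a $C^\infty$-function, one checks that $W^{\Phi_n}_{k^{n-1}}\ast f_{r_n}=V_{n,k}\ast f$, since the $j$-th coefficient is multiplied by $\psi(j/k^{n-1})\,r_n^{-j}\cdot r_n^{j}$. As $\supp\psi\subset[1,k^2]$ and the factors $k^{n-1}|\log r_n|$ arising upon differentiation stay bounded uniformly in $n$ (indeed $k^{n-1}|\log r_n|\asymp k^{-2}$), the constants $A_{\Phi_n,m}$ are bounded independently of $n$, and Theorem~\ref{Th Polinomios Cesaro}(iii) gives $\|V_{n,k}\ast f\|_{H^p}\le C\,M_p(r_n,f)$. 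Since $M_p(\cdot,f)$ is nondecreasing and $r_{n+1}=1-(1-r_n)/k$, the $\Dd$-inequality \eqref{D chek y k} at $r=r_n$ gives $\int_{r_n}^{r_{n+1}}\eta\ge C\etag(r_n)\asymp C\eta_{k^n}$, whence $\eta_{k^n}M_p^p(r_n,f)\lesssim\int_{r_n}^{r_{n+1}}M_p^p(r,f)\eta(r)\,dr$. Summing over the disjoint intervals $[r_n,r_{n+1}]\subset[r_0,1)$, on which $r\ge r_0\ge\tfrac12$, controls $\sum_{n\ge1}\eta_{k^n}\|V_{n,k}\ast f\|_{H^p}^p$ by $\|f\|_{A^p_\eta}^p$; the term $n=0$ is handled separately via the coefficient bound $\|V_{0,k}\ast f\|_{H^p}\lesssim M_p(r_0,f)\lesssim\|f\|_{A^p_\eta}$.

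I expect the reconstruction step in the lower bound to be the main obstacle, since it amounts to controlling the inflation of the $H^p$-norm under the inverse dilation restricted to the spectral block $[k^{n-1},k^{n+1})$. This is exactly where the choice $r_n=1-k^{-(n+1)}$ is essential: it keeps the multiplier norm $A_{\Phi_n,m}$ bounded uniformly in $n$, which is precisely what the universal Ces\'aro machinery of Theorem~\ref{Th Polinomios Cesaro} is designed to deliver. The remainder is bookkeeping: matching the block sums of Proposition~\ref{integral real suma} with the moments $\eta_{k^n}$, the doubling comparisons $\eta_{k^{n-1}}\asymp\eta_{k^n}\asymp\etag(r_n)$, and the separate treatment of the index $n=0$ and of the normalizing factor $r$ in $\|f\|_{A^p_\eta}^p$.
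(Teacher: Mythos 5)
Your proposal is correct, and the overall architecture (the block decomposition \eqref{propervn1}, the $k$-adic annuli $[1-k^{-(n+1)},1-k^{-(n+2)}]$, subadditivity for $p\le 1$ and Proposition~\ref{integral real suma} for $p>1$) matches the paper's. The upper bound is essentially identical: the paper also reduces to the power series $\|V_{0,k}\ast f\|_{H^p}+\sum_{n\ge1}\|V_{n,k}\ast f\|_{H^p}r^{k^{n-1}}$, the only cosmetic difference being that it quotes \cite[Lemma~3.1]{MatPavStu84} for the dilation estimate $\|(V_{n,k}\ast f)_r\|_{H^p}\lesssim r^{k^{n-1}}\|V_{n,k}\ast f\|_{H^p}$ where you give the elementary factorization $V_{n,k}\ast f=z^{k^{n-1}}g_n$. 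Where you genuinely diverge is the reconstruction step in the lower bound. The paper's route is shorter: it combines the uniform bound \eqref{propervn2}, $\|V_{n,k}\ast f_r\|_{H^p}\le C\|f_r\|_{H^p}$, with the reverse dilation inequality $\|V_{n,k}\ast f_r\|_{H^p}\ge C^{-1}r^{k^{n+1}}\|V_{n,k}\ast f\|_{H^p}$ from \cite[Lemma~3.1]{MatPavStu84}, takes a supremum over $n$, and only then integrates over the annuli, using $r^{k^{n+1}p}\gtrsim 1$ there. You instead build the de-dilation multiplier $\Phi_n(x)=\psi(x)r_n^{-k^{n-1}x}$ so that $W^{\Phi_n}_{k^{n-1}}\ast f_{r_n}=V_{n,k}\ast f$, and verify $\sup_n A_{\Phi_n,m}<\infty$ via $k^{n-1}|\log r_n|\asymp k^{-2}$ before invoking Theorem~\ref{Th Polinomios Cesaro}(iii); your computation is correct and the resulting bound $\|V_{n,k}\ast f\|_{H^p}\le CM_p(r_n,f)$ is exactly what the paper's argument delivers on the annulus. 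Your version is self-contained within the Ces\'aro-multiplier machinery already set up in the paper and avoids the external reverse dilation lemma, at the cost of one more uniform-derivative verification and a separate treatment of $n=0$ (which you handle correctly via the coefficient estimate). The remaining bookkeeping --- $\etag(r_n)\asymp\eta_{k^n}$ from Lemma~\ref{caract. pesos doblantes}(iii), $\eta_{pk^{n-1}}\asymp\eta_{k^n}$, the use of \eqref{D chek y k} to get $\int_{r_n}^{r_{n+1}}\eta\gtrsim\etag(r_n)$, and the identification of the block sums $t_m$ in Proposition~\ref{integral real suma} --- agrees with the paper.
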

\begin{proof}
 By \eqref{propervn2} and \cite[Lemma~3.1]{MatPavStu84}, there is $C>0$ such that
$$ \nm{f_r}_{H^p}\geq C^{-1}\Vert V_{n,k}\ast f_r\Vert_{H^p}\ge C^{-1} r^{k^{n+1}}\Vert V_{n,k}\ast f\Vert_{H^p}$$
for any $0\le r<1$ and $n\in\N$. So,
$$ \nm{f_r}_{H^p}\geq C^{-1} \sup\limits_{n \in \N}r^{k^{n+1}}\Vert V_{n,k}\ast f\Vert_{H^p},$$
which implies
\begin{equation}\begin{split}\label{eq:norma1}
\nm{f}_{A^p_{\eta}}^p &\asymp \int_0^1 \nm{f_r}_{H^p}^p \eta(r)dr \geq C \sum\limits_{n=0}^{\infty} \int_{1-\frac{1}{k^{n+1}}}^{1-\frac{1}{k^{n+2}}} \left(\sup\limits_{j \in \N}r^{k^{j+1}}\Vert V_{j,k}\ast f\Vert_{H^p}\right)^p \eta (r)dr
\\ & \geq C \sum\limits_{n=0}^{\infty} \Vert V_{n,k}\ast f\Vert_{H^p}^p \int_{1-\frac{1}{k^{n+1}}}^{1-\frac{1}{k^{n+2}}} r^{k^{n+1}p} \eta(r)dr 
\\ & \geq  C \sum\limits_{n=0}^{\infty} \Vert V_{n,k}\ast f\Vert_{H^p}^p \left(\etag\left(1-\frac{1}{k^{n+1}}\right)- \etag\left(1-\frac{1}{k^{n+2}}\right)\right), \quad f\in \H(\D).
\end{split}\end{equation}
Since \eqref{D chek y k} holds for $k$ and $\eta$, there exists a constant $C=C(\eta)>1$ such that 
$\etag\left(1-\frac{1}{k^{n+1}}\right)\geq C \etag\left(1-\frac{1}{k^{n+2}}\right)$, which together with 
\eqref{eq:norma1} and Lemma \ref{caract. pesos doblantes}(iii), yields
\begin{equation}\begin{split}\label{eq:norma2}
 \nm{f}_{A^p_{\eta}}^p \gtrsim \sum\limits_{n=0}^{\infty} \Vert V_{n,k}\ast f\Vert_{H^p}^p \etag\left(1-\frac{1}{k^{n+2}}\right) \asymp \sum\limits_{n=0}^{\infty} \Vert V_{n,k}\ast f\Vert_{H^p}^p \eta_{k^n} , \quad f\in \H(\D). 
 \end{split}\end{equation}
In order to show the reverse inequality, we distinguish two cases according to the range of $p$.
If $0<p\leq 1$, by using \eqref{propervn1}
 and \cite[Lemma~3.1]{MatPavStu84} we obtain
\begin{align*}
\nm{f_r}_{H^p}^p &=  \left\|\sum\limits_{n=0}^{\infty} V_{n,k}\ast f_r\right\|_{H^p}^p
\\ & \lesssim \sum\limits_{n=0}^{\infty} \left\| V_{n,k}\ast f_r\right\|_{H^p}^p 
\\ & \lesssim \|V_{0,k}\ast f\|_{H^p}^p +\sum\limits_{n=1}^{\infty} \left\| V_{n,k}\ast f\right\|_{H^p}^p  r^{k^{n-1}p},
\quad f\in \H(\D),
\end{align*}
and therefore by  Lemma \ref{caract. pesos doblantes}~(v),
\begin{equation}\begin{split}\label{eq:norma3}
\nm{f}_{A^p_{\eta}}^p & \lesssim \|V_{0,k}\ast f\|_{H^p}^p \etag(0)+ \sum\limits_{n=1}^{\infty} \left\| V_{n,k}\ast f\right\|_{H^p}^p \int_0^1 r^{k^{n-1}p}\eta (r)dr
\\ & \lesssim  \sum\limits_{n=0}^{\infty} \left\| V_{n,k}\ast f\right\|_{H^p}^p \eta_{k^n}, \quad f\in \H(\D),\,\,0<p\le 1.
\end{split}\end{equation}
On the other hand,  if $1<p<\infty$, by \eqref{propervn1} and \cite[Lemma~3.1]{MatPavStu84}, we obtain
\begin{align*}
\nm{f_r}_{H^p}^p & = \left\|\sum\limits_{n=0}^{\infty} V_{n,k}\ast f_r\right\|_{H^p}^p \leq \left(\sum\limits_{n=0}^{\infty} \left\| V_{n,k}\ast f_r\right\|_{H^p}\right)^p \\
&\lesssim \left(\|V_{0,k}\ast f\|_{H^p} +\sum\limits_{n=1}^{\infty} \left\| V_{n,k}\ast f\right\|_{H^p}  r^{k^{n-1}}\right)^p
\\&
\lesssim \|V_{0,k}\ast f\|_{H^p}^p +\left(\sum\limits_{n=1}^{\infty} \left\| V_{n,k}\ast f\right\|_{H^p}  r^{k^{n-1}}\right)^p,
\quad f\in \H(\D).
\end{align*}
The above chain of inequalities together with Proposition~\ref{integral real suma} yields
\begin{equation}\begin{split}\label{eq:norma4}
\nm{f}_{A^p_{\eta}}^p & \lesssim \|V_{0,k}\ast f\|_{H^p}^p \etag(0)+ \int_0^1 \left( \sum\limits_{n=1}^{\infty} \left\| V_{n,k}\ast f\right\|_{H^p} r^{k^{n-1}}\right)^p \eta (r)dr
\\ & \lesssim \sum\limits_{n=0}^{\infty} \left\| V_{n,k}\ast f\right\|_{H^p}^p \eta_{k^n},
\quad f\in \H(\D),\,\, 1<p<\infty.
\end{split}\end{equation}
Consequently, joining \eqref{eq:norma2}, \eqref{eq:norma3} and \eqref{eq:norma4}, the proof is finished.
\end{proof}

With Proposition~\ref{desc. norma bloques momentos y Vn} in hand, we are able to prove 
that the space of analytic functions $D^p_{\om,\mug}=\{f \in\H(\D): \int_\D|D^{\mu}(f)(z)|^p\widehat{\mu}(z)^p\om(z)
\,dA(z)<\infty\}$ is continuuosly embedded into $A_{\om}^p$ when $\om\in\DDD$ and $\mu\in\DD$.
This result together with Theorem~\ref{theorem:L-P-D-hat} proves that 
\eqref{Eq:L-P-D} holds when $\omega\in\DDD$.

\begin{theorem}\label{th:inversesuf}
Let $\om \in \DDD$,  $0<p<\infty$  and  $\mu \in \DD$. Then there exists $C=C(\om,\mu,p)>0$ such that 
$$ \nm{f}_{A_{\om}^p}\leq C\nm{D^{\mu}(f)}_{A^p_{\om \mug^p}}, \quad f \in \H(\D).$$
\end{theorem}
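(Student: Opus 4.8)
The plan is to reduce everything to the block decomposition furnished by Proposition~\ref{desc. norma bloques momentos y Vn} and then compare the blocks of $f$ and of $D^\mu f$ one at a time. First I would fix an integer $k>1$ large enough that \eqref{D chek y k} holds simultaneously for $\om$ and for $\om\mug^p$; this is legitimate because \eqref{D chek y k} is preserved under enlarging $k$, because $\om\in\DDD$, and because $\om\mug^p\in\DDD$ by Lemma~\ref{prop pesos}(ii). With this common $k$ and the associated universal Ces\'aro basis $\{V_{n,k}\}$, Proposition~\ref{desc. norma bloques momentos y Vn} gives
$$\nm{f}_{A^p_\om}^p\asymp\sum_{n=0}^\infty\om_{k^n}\nm{V_{n,k}\ast f}_{H^p}^p,\qquad \nm{D^\mu f}_{A^p_{\om\mug^p}}^p\asymp\sum_{n=0}^\infty(\om\mug^p)_{k^n}\nm{V_{n,k}\ast D^\mu f}_{H^p}^p,$$
so it suffices to dominate the first sum by the second.

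The crucial step is the block estimate
$$\nm{V_{n,k}\ast f}_{H^p}\le C\,\mu_{k^n}\,\nm{V_{n,k}\ast D^\mu f}_{H^p},\qquad n\in\N\cup\{0\},$$
which inverts, block by block, the division by $\mu_{2j+1}$ defining $D^\mu$. To obtain it I would mimic the argument of Proposition~\ref{pr:integralmeans}: on the support $[k^{n-1},k^{n+1})$ of $V_{n,k}$ set $\Phi_n(x)=\mu_{2x+1}/\mu_{k^n}$, extended to a compactly supported $C^\infty$ function. Since $\mu\in\DD$, consecutive moments are comparable and $\mu_{2j+1}\asymp\mu_{k^n}$ on the block, so $\max|\Phi_n|\asymp 1$; differentiating under the integral sign and using \eqref{gprop.} together with an induction on the order of the derivative, exactly as in Proposition~\ref{pr:integralmeans}, yields $A_{\Phi_n,m}\le C(m)$ uniformly in $n$. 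Comparing Taylor coefficients shows $V_{n,k}\ast f=\mu_{k^n}\,(W_1^{\Phi_n}\ast V_{n,k}\ast D^\mu f)$, and the block estimate follows from Theorem~\ref{Th Polinomios Cesaro}(iii).

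It remains to match the weights, for which I would prove the comparability $(\om\mug^p)_{k^n}\asymp\mu_{k^n}^p\,\om_{k^n}$. Because $\om\mug^p\in\DD$, Lemma~\ref{caract. pesos doblantes}(iii) reduces this to $\widehat{\om\mug^p}(r)\asymp\mug(r)^p\omg(r)$ at $r=1-1/k^n$. The upper bound is immediate since $\mug$ is decreasing. For the lower bound I would use $\om\in\Dd$: by \eqref{D chek y k} the integral of $\om$ over $[r,1-(1-r)/k]$ is $\gtrsim\omg(r)$, while on this interval $\mug(s)\ge\mug(1-(1-r)/k)\gtrsim\mug(r)$ by Lemma~\ref{caract. pesos doblantes}(ii); multiplying gives $\widehat{\om\mug^p}(r)\gtrsim\mug(r)^p\omg(r)$. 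Finally, Lemma~\ref{caract. pesos doblantes}(iii) applied to $\mu$ and to $\om$ turns this into $(\om\mug^p)_{k^n}\asymp\mu_{k^n}^p\om_{k^n}$.

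Chaining the three ingredients,
$$\nm{f}_{A^p_\om}^p\asymp\sum_n\om_{k^n}\nm{V_{n,k}\ast f}_{H^p}^p\lesssim\sum_n\om_{k^n}\mu_{k^n}^p\nm{V_{n,k}\ast D^\mu f}_{H^p}^p\asymp\sum_n(\om\mug^p)_{k^n}\nm{V_{n,k}\ast D^\mu f}_{H^p}^p\asymp\nm{D^\mu f}_{A^p_{\om\mug^p}}^p,$$
which is the desired inequality. The main obstacle is the block estimate: one must verify that the Ces\'aro multiplier norms $A_{\Phi_n,m}$ stay bounded uniformly in $n$, and it is precisely here that the smoothness of the universal Ces\'aro basis and the moment estimate \eqref{gprop.} are essential. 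The weight comparability, by contrast, is a soft consequence of $\om\in\DDD$ and the monotonicity of $\mug$.
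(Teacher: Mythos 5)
Your proposal is correct and follows essentially the same route as the paper: a common $k$ for $\om$ and $\om\mug^p$ (via Lemma~\ref{prop pesos}), the block decomposition of Proposition~\ref{desc. norma bloques momentos y Vn} for both norms, the block estimate $\nm{V_{n,k}\ast f}_{H^p}\lesssim \mu_{k^n}\nm{V_{n,k}\ast D^{\mu}f}_{H^p}$, and the moment comparison $\om_{k^n}\mu_{k^n}^p\lesssim(\om\mug^p)_{k^n}$. The only difference is that the paper obtains the block estimate by citing \cite[Lemma~9(i)]{PelRatproj}, whereas you reprove it directly with the smooth multiplier $\Phi_n(x)=\mu_{2x+1}/\mu_{k^n}$ and Theorem~\ref{Th Polinomios Cesaro}(iii), in the spirit of Proposition~\ref{pr:integralmeans}; this is a valid, self-contained substitute (and you prove the moment comparison two-sidedly where only one direction is needed, which is harmless).
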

\begin{proof}
By Lemma \ref{caract. D check}(iii) there exists $k=k(\om)>1$, $k \in \N$ such that \eqref{D chek y k} holds for $k$ and $\omega$. 
Next, Lemma \ref{prop pesos}~(ii) ensures that $\om \mug^p\in \DDD$ and
Lemma \ref{prop pesos} (i) implies that  $\om \mug^p$ satisfies \eqref{D chek y k} with the same $k$ as $\omega$ does.
Therefore, we can apply Proposition \ref{desc. norma bloques momentos y Vn} to the weights $\om,\om\mug^p \in \DDD$   and the choosen $k$.
That is, there are positive constants $C_j(\om,p)>0$, $j=1,2$ such that
\begin{equation}
\label{desc. en bloques para omega}
 C_1(\om,p) \sum\limits_{n=0}^{\infty} \om_{k^n}\Vert V_{n,k}\ast f\Vert_{H^p}^p 
\le \nm{f}_{A_{\om}^p}^p\le C_2(\om,p) \sum\limits_{n=0}^{\infty} \om_{k^n}\Vert V_{n,k}\ast f\Vert_{H^p}^p, \quad f \in \H(\D),
\end{equation}
and $C_j(\om,\mu,p)>0$, $j=3,4$ such that
\begin{equation}
\label{desc. en bloques para omegamug}
 C_3(\om,\mu,p) \sum\limits_{n=0}^{\infty} (\om\mug^p)_{k^n}\Vert V_{n,k}\ast D^{\mu}f\Vert_{H^p}^p
\le \nm{D^{\mu}f}_{A_{\om\mug^p}^p}^p\le
C_4(\om,\mu,p) \sum\limits_{n=0}^{\infty} (\om\mug^p)_{k^n}\Vert V_{n,k}\ast D^{\mu}f\Vert_{H^p}^p,
\end{equation}
for all $f \in \H(\D).$

Observe that for each $n\in\N$, $(V_{n,k}\ast f)(z)=\sum\limits_{j=k^{n-1}}^{k^{n+1}-1} \widehat{f}(j)\widehat{V_{n,k}}(j)z^j$ and 
$$(V_{n,k}\ast D^{\mu}f)(z)=\sum\limits_{j=k^{n-1}}^{k^{n+1}-1} \frac{\widehat{f}(j)}{\mu_{2j+1}}\widehat{V_{n,k}}(j)z^j.$$ So, applying \cite[Lemma~9(i)]{PelRatproj} to $g=V_{n,k}\ast D^{\mu} (f)$, $h=V_{n,k}\ast  f$ and $S_{k^{n-1}, k^{n+1}-1} h= V_{n,k} \ast f$, there exists a constant $C=C(p)>0$ such that 
\begin{equation}
\label{sacar coef. norma bloques}
\nm{V_{n,k}\ast f}_{H^p}\leq C \mu_{k^{n-1}}\nm{V_{n,k}\ast D^{\mu}(f)}_{H^p}, \quad f \in \H(\D).
\end{equation}
A similar argument, shows that
\begin{equation}
\label{sacar coef. norma bloques0}
\nm{V_{0,k}\ast f}_{H^p}\leq C \mu_{0}\nm{V_{0,k}\ast D^{\mu}(f)}_{H^p}, \quad f \in \H(\D).
\end{equation}
Moreover, by Lemma \ref{caract. pesos doblantes}  and  \eqref{D chek y k} (for $k$ and $\omega$),
there is $C=(\om,\mu,p)>0$ such that
\begin{equation}\begin{split} \label{des. momentos}
\om_{k^n}\mu_{k^{n-1}}^p 
 & \le C\om_{k^n}\mu_{k^{n+1}}^p 
\\ & \le C\omg\left(1-\frac{1}{k^{n}}\right)\mug\left(1-\frac{1}{k^{n+1}}\right)^p
\\ & \le C\mug\left(1-\frac{1}{k^{n+1}}\right)^p \int_{1-\frac{1}{k^n}}^{1-\frac{1}{k^{n+1}}} \om(s)\,ds
\\ & \le C \int_{1-\frac{1}{k^n}}^{1-\frac{1}{k^{n+1}}} \om(s)\mug(s)^p\,  ds 
\\ & \le C (\om \mug^p)_{k^n}, \quad \text{for all $n \in \N$.}
\end{split}\end{equation}

Then, by joining \eqref{desc. en bloques para omega}, \eqref{sacar coef. norma bloques}, \eqref{sacar coef. norma bloques0}, \eqref{des. momentos} and \eqref{desc. en bloques para omegamug},
\begin{align*}
\nm{f}_{A_{\om}^p}^p & \le C\sum\limits_{n=0}^{\infty} \om_{k^n}\Vert V_{n,k}\ast f\Vert_{H^p}^p 
\\ & \le C\left[\om_1\mu^p_{0}\nm{V_{0,k}\ast D^{\mu}(f)}_{H^p}^p+
\sum\limits_{n=1}^{\infty} \om_{k^n}\mu_{k^{n-1}}^p\Vert V_{n,k}\ast D^{\mu}(f)\Vert_{H^p}^p \right]\\
& \le C \sum\limits_{n=0}^{\infty} (\om\mug^p)_{k^{n}}\Vert V_{n,k}\ast D^{\mu}(f)\Vert_{H^p}^p 
\le C \nm{D^{\mu}(f)}_{A^p_{\om \mug^p}}^p, \quad f \in \H(\D).
\end{align*}
This finishes the proof.
\end{proof}

In order to complete a proof of Theorem~\ref{th:L-P-D} we have to show that 
$\om\in\DDD$ is a necessary condition so that \eqref{Eq:L-P-D} holds. This implication will follow from Theorem~\ref{theorem:L-P-D-hat}
and the next result. 

\begin{theorem}\label{th:inverseM}
Let $\om$ be a radial weight, 
 $0<p<\infty$ and $\mu\in\DDD$. If there exists $C=C(\om,\mu,p)>0$ such that
\begin{equation}
\label{inverse ineq Lp}
\Vert f \Vert_{A^p_{\om}}^p \leq C \int_{\D}\vert D^{\mu} (f)(z) \vert^p \om(z)\mug(z)^p dA(z), \quad f \in \H(\D),    
\end{equation}
then $\om \in \M$.
\end{theorem}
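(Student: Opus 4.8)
The plan is to reduce \eqref{inverse ineq Lp} to a statement about moments by testing on monomials, and then to run the threshold analysis from the necessity part of the proof of Theorem~\ref{theorem:L-P-D-hat}, but with the inequality reversed.

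First I would insert $f(z)=z^n$ into \eqref{inverse ineq Lp}. Since $D^{\mu}(z^n)=\mu_{2n+1}^{-1}z^n$, integration in polar coordinates turns \eqref{inverse ineq Lp} into
$$\mu_{2n+1}^p\,\om_{np+1}\le C\,(\om\mug^p)_{np+1},\qquad n\in\N\cup\{0\}.$$
Because $\mu\in\DD$, Lemma~\ref{caract. pesos doblantes}~(iii) gives $\mu_{2n+1}\asymp\mug\big(1-\tfrac1{2n+1}\big)$, and the same monotonicity-in-the-index device used to pass from the monomial estimate to \eqref{eq:j1} upgrades the discrete family above to a continuous inequality valid for all $x\ge1$,
$$\mug\Big(1-\tfrac1x\Big)^{p}\int_0^1 s^{xp}\om(s)\,ds\le C\int_0^1 s^{xp}\om(s)\mug(s)^p\,ds,$$
equivalently
$$\int_0^1 s^{xp}\om(s)\left(\left(\frac{\mug(s)}{\mug(1-\frac1x)}\right)^{p}-\frac1C\right)ds\ge0,\qquad x\ge1.$$
This is exactly the analogue of \eqref{eq:j1} with the two competing terms interchanged and the inequality reversed.

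Next I would carry out the threshold analysis. As $\mug$ is decreasing, the bracket is positive for $s$ below a level $s_x$ and negative above it, and since $\frac1C<1$ we have $s_x>1-\frac1x$; introducing, as in the model proof, a second level $r_x$ at which the ratio $\mug(s)/\mug(1-\frac1x)$ equals a fixed multiple of $C^{-1/p}$, one splits the integral at these points and estimates the negative tail by a multiple of $\omg(s_x)$. The place where $\mu\in\Dd$ enters is Lemma~\ref{caract. D check}~(ii): the power control $\mug(s)\le C\big(\frac{1-s}{1-t}\big)^{\b}\mug(t)$ pins down the distance of $s_x$ and $r_x$ from $1$ on the scale $1-\frac1x$, which is precisely what converts the thresholded estimate into a comparison of $\om$-moments at indices differing by a fixed factor $k$. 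Performing the Fubini and integration-by-parts step as in the derivation of \eqref{eq:th11}, and feeding in the $\Dd$-bound for $\mu$, I would arrive at a moment inequality $\om_{x}\ge C\,\om_{kx}$ for all $x$ past some $x_0$ (which suffices after adjusting the constant), that is $\om\in\M$; alternatively one may phrase the outcome as the moment condition of Lemma~\ref{caract. M}~(ii) and conclude.

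The crux, exactly as in the proof of Theorem~\ref{theorem:L-P-D-hat}, is that the contribution of the integral near $s=1$ is a priori of the same order as the left-hand term, so naive truncation is circular and yields no information; the real work is the careful choice of the two thresholds and the accompanying constant bookkeeping, using the $\Dd$-decay of $\mug$ to guarantee that a definite proportion of the mass sits away from $1$, and thereby to extract a ratio strictly smaller than $1$ between the moments $\om_x$ and $\om_{kx}$. I expect this quantitative extraction, rather than the reduction to monomials, which is routine, to be the main obstacle.
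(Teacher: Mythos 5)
Your reduction to monomials and the passage to the continuous inequality
\begin{equation*}
\int_0^1 s^{xp}\om(s)\left(\left(\frac{\mug(s)}{\mug(1-\frac1x)}\right)^{p}-\frac1{C^p}\right)ds\ge 0,\qquad x\ge1,
\end{equation*}
match the paper, as does the introduction of the two thresholds $s_x$ (where the ratio drops to $C^{-1}$) and $r_x$ (where it drops to $2^{-1/p}C^{-1}$) and the use of Lemma~\ref{caract. D check}~(ii) for $\mu$ to force $1-r_x\gtrsim 1/x$. The gap is in the middle of the argument: the two estimates you name for the split integral are precisely the ones from the necessity half of Theorem~\ref{theorem:L-P-D-hat}, and they point in the wrong direction here. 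In that proof the tail over $(s_x,1)$ carries the part of the bracket that must be bounded \emph{above}, and $C^p\omg(s_x)$ does the job, while the head over $(0,s_x)$ is bounded \emph{below} by the Fubini/integration-by-parts device that produces $\omg(r)r^{px}-\omg(r_x)r_x^{px}$ and hence the pointwise condition $\DD$. In the present theorem the signs are reversed: one needs the tail bounded \emph{below} --- this is exactly what the second threshold buys, namely $\int_{s_x}^1(\cdots)\ge\frac1{2C^p}\bigl(\om_{xp+1}-\int_0^{r_x}s^{xp+1}\om(s)\,ds\bigr)$ --- and the head bounded \emph{above}. Bounding the tail above by $\omg(s_x)$, or lower-bounding the head via Fubini, yields nothing usable, and neither step can produce a moment inequality; the Fubini device leads to a pointwise comparison of $\omg$ at $r$ and $\frac{1+r}2$, i.e.\ back towards $\DD$, not towards $\M$.

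The missing idea is how the head is controlled and why the output is the class $\M$: on $(0,r_x)$ one applies Lemma~\ref{caract. pesos doblantes}~(ii) to $\mu\in\DD$, $\mug(s)\le C_1\mug(r_x)\bigl(\frac{1-s}{1-r_x}\bigr)^{\a}$, so that by the very definition of $r_x$ the factor $\bigl(\mug(s)/\mug(1-\frac1{x+1})\bigr)^p$ in the head becomes $\lesssim (1-s)^{\a p}/(1-r_x)^{\a p}$. This converts the head into $\frac{C}{(1-r_x)^{\a p}}(\om_{[\a p]})_{xp+1}$, and with $1-r_x\gtrsim 1/x$ one lands on $\om_{xp+1}\le Cx^{\a p}(\om_{[\a p]})_{xp+1}$, which is condition~(ii) of Lemma~\ref{caract. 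M} and yields $\om\in\M$. Your closing remark that one ``may phrase the outcome as the moment condition of Lemma~\ref{caract. M}~(ii)'' names the right target, but the route you describe does not reach it: the $\DD$-growth estimate for $\mug$ on the head is the step that actually produces the weighted moment $(\om_{[\a p]})_x$, and it is absent from your sketch.
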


\begin{proof}
By choosing $f_n(z)=z^n\in \H(\D)$, $n\in\N$ in (\ref{inverse ineq Lp}), we deduce 
\begin{align*}
\int_{\D} \vert z\vert^{np} \om(z)\,dA(z)
& \leq C^p\int_{\D} \frac{\vert z  \vert^{np}}{\mu_{2n+1}^p}\mug(z)^p \om(z)\,dA(z)
\\ & \le C^p \int_{\D} \frac{\vert z  \vert^{np}}{\mug\left( 1-\frac{1}{n+1}\right)^p}\mug(z)^p \om(z)dA(z), \; n \in \N. 
\end{align*}
Now, if $x\geq 1$, we can find $m \in \N$ such that $m\leq x<m+1$. Then, 
 bearing in mind the monotonicity of $s^x$ and $\widehat{\mu}(s)$ 
\begin{align*}
\int_0^1 s^{xp+1}\om(s)ds &\leq \int_0^1 s^{mp+1} \om(s)ds
\\ & \leq C^p  \int_0^1 \frac{s^{mp+1}}{\mug\left( 1-\frac{1}{m+1}\right)^p}\mug(s)^p \om(s)ds
\\ & \leq C^p  \int_0^1 \frac{s^{(m+1)p+1}}{\mug\left( 1-\frac{1}{m+1}\right)^p}\mug(s)^p \om(s)ds
\\ & \le C^p\int_0^1 \frac{s^{xp+1}}{\mug\left( 1-\frac{1}{x+1}\right)^p}\mug(s)^p \om(s)ds,\quad x\ge 1.
\end{align*}
That is, there exists $C=C(p, \om, \mu)> 1$ such that
$$\int_0^1 s^{xp+1}\om(s)\left(\frac{1}{C^p}-\left(\frac{\mug(s)}{\mug\left( 1-\frac{1}{x+1}\right)}\right)^p\right)ds \leq 0.$$

Since $\mug\left( 1-\frac{1}{x+1}\right)<C\mug(0)$ for all $x\geq 1$, there exists $s_x=s_x(x,C, \mu)$,  the supremum of the points $s\in (1-\frac{1}{x},1)$ such that
$\frac{\mug(s)}{\mug\left( 1-\frac{1}{x+1}\right)}=\frac{1}{C}.$ 
Then,
\begin{equation*}\begin{split}
&\int_{s_x}^1 s^{xp+1}\om(s)\left(\frac{1}{C^p}-\left(\frac{\mug(s)}{\mug\left( 1-\frac{1}{x+1}\right)}\right)^p\right)ds
\\ & \leq \int_0^{s_x} s^{xp+1}\om(s)\left(\left(\frac{\mug(s)}{\mug\left( 1-\frac{1}{x+1}\right)}\right)^p-\frac{1}{C^p}\right)\,ds,\quad x\ge 1. 
\end{split}\end{equation*}

There also exists $r_x=r_x(x,C, \mu) \in (s_x,1)$ 
the supremum of the points $r\in (1-\frac{1}{x},1)$
such that
$\frac{\mug(r)}{\mug\left( 1-\frac{1}{x+1}\right)}=\frac{1}{C}\left(\frac{1}{2}\right)^{\frac{1}{p}}.$ 
So,
\begin{align*}
\int_{s_x}^1 s^{xp+1}\om(s)\left(\frac{1}{C^p}-\left(\frac{\mug(s)}{\mug\left( 1-\frac{1}{x+1}\right)}\right)^p\right)ds&\geq \frac{1}{2C^p}\int_{r_x}^1 s^{xp+1}\om(s)ds\\
&=\frac{1}{2C^p}\om_{xp+1}-\frac{1}{2C^p}\int_0^{r_x} s^{xp+1}\om(s)\,ds,\quad x\ge 1.
\end{align*}
Therefore,
\begin{equation}\begin{split}\label{eq:in1}
\om_{xp+1}&\leq 2C^p \int_0^{s_x} s^{xp+1}\om(s)\left(\left(\frac{\mug(s)}{\mug\left( 1-\frac{1}{x+1}\right)}\right)^p-\frac{1}{C^p}\right)ds+\int_0^{r_x} s^{xp+1}\om(s)ds 
\\ &\leq 2C^p \int_0^{r_x} s^{xp+1}\om(s)\left(\frac{\mug(s)}{\mug\left( 1-\frac{1}{x+1}\right)}\right)^pds+\int_0^{r_x} s^{xp+1}\om(s)ds,
\quad x\ge 1.
\end{split}\end{equation}
Next, by Lemma \ref{caract. pesos doblantes}(ii),  
there exist $C_1=C_1(\mu)>1$ and $\a=\a(\mu)>0$ such that
\begin{equation}\begin{split}\label{eq:in2}
\mug(s)\leq C_1 \mug(r_x)\left(\frac{1-s}{1-r_x}\right)^{\a}
=C_1\left(\frac{1}{2}\right)^{\frac{1}{p}}\frac{1}{C}\mug\left(1-\frac{1}{x+1}\right)\left(\frac{1-s}{1-r_x}\right)^{\a},\quad 0<s\le r_x,
\end{split}\end{equation}
where  in the last identity we have used the definition of $r_x$.
Consequently, putting together \eqref{eq:in1} and \eqref{eq:in2}
\begin{equation}\begin{split}\label{eq:th101}
 \om_{xp+1}
 \leq 
\frac{C_1^p}{(1-r_x)^{\a p}} (\om_{[\a p]})_{xp+1},\quad x\ge 1.
 \end{split}\end{equation}
On the other hand, by Lemma \ref{caract. D check}~(ii) there exist $C_2=C_2(\mu)>0$ and $\b=\b(\mu)>0$ such that
$$\left(\frac{1}{2}\right)^{\frac{1}{p}}\frac{1}{C}=\frac{\mug(r_x)}{\mug\left(1-\frac{1}{x+1}\right)}\leq C_2(2x(1-r_x))^{\b},$$
so there is $C_3=C_3(p,\om,\mu)>0$ such that  
$\frac{1}{1-r_x}\le C_3 x$. This together  with \eqref{eq:th101} implies that there is $C=C(\om,\mu,p)>0$ such that
\begin{equation*}
\om_{xp+1}\leq C x^{\a p}(\om_{[\a p]})_{xp+1},\quad x\ge 1.
\end{equation*}
That is,
\begin{equation*}
\om_{y}\leq C \left(\frac{y-1}{p} \right)^{\a p}(\om_{[\a p]})_{y}\le
C\left(\frac{1}{p} \right)^{\a p} y^{\a p}(\om_{[\a p]})_{y} ,\quad y\ge p+1,
\end{equation*}
which together with Lemma~\ref{caract. M} implies that $\om\in\M$.  This finishes the proof.
\end{proof}

Finally, we are ready to prove Theorem~\ref{th:L-P-D}.

\begin{Prf}{\em{ Theorem~\ref{th:L-P-D}.}}
If $\om\in\DDD$, putting together Theorem~\ref{theorem:L-P-D-hat} and Theorem~\ref{th:inversesuf}, we get
\eqref{Eq:L-P-D}. Reciprocally, if \eqref{Eq:L-P-D} holds, $\om\in\DD$  by Theorem~\ref{theorem:L-P-D-hat}  and  $\om\in \M$
by
Theorem~\ref{th:inverseM}. Then, it follows from \cite[Theorem~3]{PR19} that 
$\om\in \DD\cap\M=\DDD$. 

This finishes the proof.
\end{Prf}
\medskip

We would like to point out that it would be interesting to obtain some progress about Littlewood-Paley inequalities for fractional derivatives on
Bergman spaces $A^p_\omega$ induced by a non-radial weight $\omega$. For instance, to
know whether or  not \eqref{Eq:L-P-D} ($\mu\in\DDD$) remains true for   Bekoll\'e-Bonami weights.


\begin{thebibliography}{99}

\bibitem{AlCo}          A.~Aleman and O.~Constantin,
                        Spectra of integration operators on weighted Bergman spaces,
                        J. Anal. Math. 109 (2009), 199--231.
									
\bibitem{APR2}          A.~Aleman, S. Pott and M. C. Reguera, 
                  Characterizations of a limiting class $B_\infty$ of Békollé-Bonami weights,
                   Rev. Mat. Iberoam. 35 (2019), no. 6, 1677--1692. 

\bibitem{AS}            A.~Aleman and A.~Siskakis,
                        Integration operators on Bergman spaces,
                        Indiana Univ. Math. J. 46 (1997), 337--356.


\bibitem{BWZ}          	G.~Bao, H.~Wulan and K.~Zhu, 
                        A Hardy-Littlewood theorem for Bergman spaces, 
												Ann. Acad. Sci. Fenn. 43 (2018), 807--821.

\bibitem{BlascoCan95} O.~Blasco, Multipliers on spaces of analytic functions. 
Canad. J. Math. 47 (1995), no. 1, 44–64. 





\bibitem{Flett72} T. ~M.~Flett,
The dual of an inequality of Hardy and Littlewood and some related inequalities.
J. Math. Anal. Appl. 38 (1972), 746–765.
 
\bibitem{HLMathZ} G.~H.~Hardy and J.~L.~Littlewood,
Some properties of fractional integrals. II.
Math. Z. 34 (1932), no. 1, 403–439. 



\bibitem{HuLvSc} Z.~Hu, X.~Lv and A.~P.~ Schuster,
 Bergman spaces with exponential weights. J. Funct. Anal. 276 (2019), no. 5, 1402--1429.

\bibitem{JevPac98}  M.~Jevti\'c and M.~Pavlovi\'c, 
On multipliers from $H^p$ to $l^q$, $0<q<p<1$.
Arch. Math. (Basel) 56 (1991), no. 2, 174--180.


                 
\bibitem{MatelPav}    M.~Mateljevic and M.~Pavlovic, $L^p$-behaviour of power series
                      with positive coefficients and Hardy spaces. Proc. Amer. Math.
                      Soc., 87 (1983), 309--316.



\bibitem{MatPavStu84}   		M.~Mateljevi\'c and M.~Pavlovi\'c,
						$L^p$ behaviour of the integral means of analytic functions,
				 Studia Math. 77 (1984), 219--237.


\bibitem{Pabook} M.~Pavlovi\'c,
				Introduction to function spaces on the disk.
				Posebna Izdanja [Special Editions], vol. {\bf{20}}, Matemati\v cki
				Institut SANU, Beograd, 2004. {\it available online at:}
				http://f3.tiera.ru/2/M\_Mathematics/MC\_Calculus/
				MCc\_Complex\%20variable/Pavlovic\%20M.\%20
			
														
\bibitem{Pabook2}    M.~Pavlovi\'c, Function classes on the unit disc.
									An introduction. De Gruyter Studies in Mathematics, 52.
									De Gruyter, Berlin, 2014. xiv+449 pp. ISBN: 978-3-11-028123-1.
														
\bibitem{PavP}         	M. Pavlovi\'{c} and J.~A. Pel\'{a}ez,
						An equivalence for weighted integrals of an analytic function and its derivative.
						Math. Nachr. 281 (2008), no. 11, 1612--1623.
						
\bibitem{PelSum14}          J.~A.~ Pel\'aez,
                            Small weighted Bergman spaces. 
                            Proceedings of the Summer School in Complex and Harmonic Analysis, and Related Topics, 29–98,
                            Publ. Univ. East. Finl. Rep. Stud. For. Nat. Sci., 22, Univ. East. Finl., Fac. Sci. For., Joensuu, 2016.



\bibitem{PelRathg}  J.~A.~ Pel\'aez and J. R\"atty\"a, 
Generalized Hilbert operators on weighted Bergman spaces. 
Adv. Math. 240 (2013), 227--267.

\bibitem{PelRat}            J.~A.~ Pel\'aez and J. R\"atty\"a,
                            Weighted Bergman spaces induced by rapidly increasing weights,
                            Mem. Amer. Math. Soc. 227 (2014), no.~1066.
                            
\bibitem{PelRatAdv2016}   J. A. Pel\'aez and J.~R\"atty\"a,
                         Trace class criteria for Toeplitz and composition
                         operators on small Bergman spaces. Adv.~Math. 293 (2016), 606--643.



\bibitem{PelRatproj} J.~A.~ Pel\'aez and J. R\"atty\"a, 
                    Two weight inequality for Bergman projection.
                     J. Math. Pures Appl. (9) 105 (2016), no. 1, 102--130.

 \bibitem{PR19} J.~A.~ Pel\'aez and J. R\"atty\"a,
                           Bergman projection induced by radial weight.
                          Adv. Math. 391 (2021), Paper No. 107950.

\bibitem{PR22} J.~A.~ Pel\'aez and J. R\"atty\"a, Weighted norm inequalities for derivatives on Bergman spaces, preprint, submitted to publication,
arXiv:2107.13829  

\bibitem{PeralaAASF20} A. ~Per\"al\"a, General fractional derivatives and the Bergman projection.
                      Ann. Acad. Sci. Fenn. Math. 45 (2020), no. 2, 903--913. 
                      
 \bibitem{Si}            A.~Siskakis,
                        Weighted integrals of analytic functions,
                        Acta Sci. Math. (Szeged) 66 (2000), no. 3-4, 651

\bibitem{ZhuPacific94} K. Zhu,  Bergman and Hardy spaces with small exponents.
                       Pacific J. Math. 162 (1994), no. 1, 189--199. 



\end{thebibliography}
\end{document}